\DeclareMathAlphabet\mathbfcal{OMS}{cmsy}{b}{n}
\newtheorem{theorem}{Theorem}
\theoremstyle{plain}
\newtheorem{corollary}{Corollary}
\newtheorem{definition}{Definition}
\newtheorem{lemma}{Lemma}
\newtheorem{remark}{Remark}
\newtheorem{question}{Question}
\newtheorem*{proposition*}{Proposition}
\newcommand{\GL}{{\mbox{GL}}}
\newcommand{\Hom}{{\mbox{Hom}}}
\newcommand{\End}{{\mbox{End}}}
\newcommand{\vol}{{\mbox{vol}}}
\newtheorem*{observation*}{Observation}
\newtheorem*{theorem*}{Theorem}
\newtheorem*{claim*}{Claim}  
\title{Complex structures as critical points}
\author{Gabriella Clemente}
\date{}
\renewcommand\tableofcontents{%
    \@starttoc{toc}%
}
\begin{document}
\maketitle

\begin{abstract}
This note aims at obtaining a variational characterization of complex structures by means of a calculus of variations for real vector bundle valued differential forms, and outlines a perspective to study existence questions via functionals and stability notions.
\end{abstract} 

\section*{Introduction}
It is believed that the $6$ dimensional sphere $S^6$ provides a positive answer to the\\

\begin{question}\label{QF1}
Are there any (compact) almost-complex manifolds of real dimension at least $6$ that do not support a complex structure?\\
\end{question}

This note is concerned with the moduli space version of this question, which will be referred to as the\\

\noindent
\textbf{Complex structure existence (CSE) problem:} Obtain a full characterization of almost-complex, but not complex manifolds.\\

As it often happens in geometry, when studying the existence of structures in a given class, one must deal with the difficulty that the class is infinite dimensional. This is the case for the space of almost-complex structures, which is a Fr{\'e}chet manifold. Deciding if an almost-complex manifold is complex requires, in theory, computing what is known as the Nijenhuis tensor infinitely many times, once per each almost-complex structure. So the CSE problem is mostly inaccessible through the Nijenhuis tensor directly, even when the computational complexity of the problem is greatly reduced by means of the topology of the space of almost-complex structures. To have an idea of what this means, consider $S^6,$ which has a unique, up to homotopy, almost-complex structure, and this still does not seem to reduce the problem to something reasonable. While the CSE is a moduli level problem, an answer to Question \ref{QF1} lies in obstruction theory, and curvature presents an obstruction.

The Nijenhuis tensor of an almost-complex structure can be rewritten in terms of the covariant exterior derivative with respect to an arbitrary torsion-free connection on the tangent bundle (cf.\ Lemma 1, \cite{ACOTI}). Specific choices of such connections lead to different geometric consequences. Higher derivatives of the Nijenhuis tensor involve the curvature of the connection. So these higher order equations act as a bridge between the metric geometry of the manifold, and its almost-complex, and complex geometries. By choosing the Levi-Civita connection of a Riemannian metric, one can examine how the various notions of curvature (e.g.\ $Rm,$ $Ric,$ $sec,$ $s,$ etc.) interact with hypothetical complex structures. Based on the curvature obstruction equations obtained in \cite{ACOTI}, it is reasonable to expect that canonical metrics can prevent the existence of complex structures in the compact case. The curvature point of view allows for a narrowing down of Question \ref{QF1}. One may ask, for instance, if Riemannian metrics of positive constant curvature obstruct the existence of complex structures on compact manifolds of real dimension at least $4.$ If the answer is yes, then $S^6$ could not be complex. A positive answer would also imply that compact complex manifolds in high enough dimensions are at the very least ``ridged," meaning that there needs to be a minimal amount of irregularity in the curvature of any Riemannian metric, in this way ruling out the positive constant curvature scenario. The CSE problem will be treated here in a formal way, however, as the priority is to set up a language of vector bundle valued forms on real manifolds and their calculus of variations.

This note begins with the algebraic setup for said calculus, proceeds to study the variational character of (some nearly) integrable almost-complex structures, and ends with a perspective for advancing the CSE problem from the viewpoint of K-stability. 

\section{Algebraic machinery}
Here $M$ is a compact, almost-complex manifold of real even dimension $n \geq 2,$ and \[\Omega^{\bullet} (M, T_M)=\bigoplus_{k=0}^n \Omega^k (M, T_M)\] is the space of tangent bundle valued differential forms. A form $\gamma \in \Omega^{\bullet} (M, T_M)$ decomposes as $\gamma=\sum_{k=0}^n \gamma_k,$ $\gamma_k \in  \Omega^k (M, T_M).$ The projection onto $k$-th degree forms will be denoted by $p_k$; i.e.\ \[p_k:\Omega^{\bullet} (M, T_M) \to \Omega^k (M, T_M), \quad p_k (\gamma)=\gamma_k.\] The material on actions on the space of tangent bundle valued forms that comes next is from \cite{ACOTI}, and will be summarized here for the sake of completeness. 

The spaces \[\Omega^{\bullet} \big(M, \End_{\mathbb{R}}(T_M)\big)=\bigoplus_{k=0}^n \Omega^k \big(M, \End_{\mathbb{R}}(T_M)\big), \mbox{ and } \Omega^{\bullet} \big(M, \bigwedge^{\bullet} {T_M}\big)=\bigoplus_{k=0}^{2n} \bigoplus_{p+q=k}  \Omega^p \big(M, \bigwedge^q {T_M}\big)\] of endomorphism, respectively polyvector valued forms can be equipped with operations defined as follows. 

Consider the action \[\cdot:\End_{\mathbb{R}}(T_M) \times T^*_M \otimes T_M \to T^*_M \otimes T_M,\] \[(S, f\otimes e)\mapsto S\cdot (f\otimes e):=f\otimes S(e).\] The product in $\Omega^{\bullet} \big(M, \End_{\mathbb{R}}(T_M)\big)$ is given as follows: for any $\alpha \in \Omega^k \big(M, \End_{\mathbb{R}}(T_M)\big), \beta \in \Omega^l \big(M, \End_{\mathbb{R}}(T_M)\big),$ $\alpha \wedge \beta \in \Omega^{k+l} \big(M, \End_{\mathbb{R}}(T_M)\big),$ and 
\begin{equation*}
\begin{split}
(\alpha \wedge \beta)(X_1,\dots,X_{k+l})=\frac{1}{k!l!}\sum_{\sigma \in S_{k+l}} sign(\sigma) \alpha(X_{\sigma(1)},\dots,X_{\sigma(k)})\cdot \beta(X_{\sigma(k+1)},\dots,X_{\sigma(k+l)}),
\end{split}
\end{equation*}
where indeed \[\alpha(X_{\sigma(1)},\dots,X_{\sigma(k)}), \beta(X_{\sigma(k+1)},\dots,X_{\sigma(k+l)}) \in \End_{\mathbb{R}}(T_M).\]

The product in $\Omega^{\bullet} \big(M, \bigwedge^{\bullet} {T_M}\big)$ is defined with a normalization factor{\footnote{The $\frac{1}{2}$ factor is there to ensure the validity of Lemma \ref{K}.}, but without any twisting. Namely, for any $\gamma \in  \Omega^i \big(M, \bigwedge^j {T_M}\big), \theta \in \Omega^{k} \big(M, \bigwedge^{l} {T_M}\big),$ $\gamma \wedge \theta \in \Omega^{i+k} \big(M, \bigwedge^{j+l} {T_M}\big),$ and

\begin{equation*}
\begin{split}
(\gamma \wedge \theta)(X_1,\dots,X_{i+k})=\frac{1}{2}\frac{1}{i!k!}\sum_{\sigma \in S_{i+k}} sign(\sigma) \gamma(X_{\sigma(1)},\dots,X_{\sigma(i)})\wedge \theta(X_{\sigma(i+1)},\dots,X_{\sigma(i+k)}),
\end{split}
\end{equation*}
where \[\gamma(X_{\sigma(1)},\dots,X_{\sigma(i)})\in \bigwedge^j {T_M}, \mbox{ and }\theta(X_{\sigma(i+1)},\dots,X_{\sigma(i+k)})\in \bigwedge^{l} {T_M}.\] 

Thus, $\Omega^{\bullet} \big(M, \End_{\mathbb{R}}(T_M)\big),$ and $\Omega^{\bullet} \big(M, \bigwedge^{\bullet} {T_M}\big)$ can be regarded as graded algebras with these products.

Note that if $\gamma \in \Omega^i(M,T_M),$ and $\theta \in \Omega^k(M,T_M),$ then $\gamma \wedge \theta = (-1)^{ik+1} \theta \wedge \gamma$ so that $\gamma, \theta$ anti-commute iff one of $i$ and $k$ is even. In particular, when $\gamma$ is homogeneous of even degree, $\gamma \wedge \gamma=0.$

The tangent bundle forms $\Omega^{\bullet} (M, T_M)$ are acted on the left by $\Omega^{\bullet} \big(M, \End_{\mathbb{R}}(T_M)\big),$ and on the right by $\Omega^{\bullet} \big(M, \bigwedge^{\bullet} {T_M}\big).$ The left action combines the evaluation map \[ \End_{\mathbb{R}}(T_M) \times T_M \to T_M, \quad (S,y) \mapsto S(y),\] with the wedge product of forms: for any $\rho \in \Omega^s (M, T_M),$ and $\alpha \in \Omega^k \big(M, \End_{\mathbb{R}}(T_M)\big),$ $\alpha \wedge \rho \in \Omega^{k+s}(M, T_M),$ and 

\begin{equation*}
\begin{split}
(\alpha \wedge \rho)(X_1,\dots,X_{k+s})=\frac{1}{k!s!}\sum_{\sigma \in S_{k+s}} sign(\sigma) \alpha(X_{\sigma(1)},\dots,X_{\sigma(k)})\big(\rho(X_{\sigma(k+1)},\dots,X_{\sigma(k+s)})\big),
\end{split}
\end{equation*}
where \[\alpha(X_{\sigma(1)},\dots,X_{\sigma(k)})\in \End_{\mathbb{R}}(T_M), \mbox{ and }\rho(X_{\sigma(k+1)},\dots,X_{\sigma(k+s)}) \in T_M.\]
The right action makes use of the isomorphism \[\Hom_{\mathbb{R}}\Big(\bigwedge^j {T_M},T_M\Big)\simeq \bigwedge^j {T^*_M} \otimes T_M,\] the evaluation map 
\[\Hom_{\mathbb{R}}\Big(\bigwedge^j {T_M},T_M\Big) \times \bigwedge^j {T_M} \to T_M,\]
\[(C:=c\otimes z,x)\mapsto C(x)=(c\otimes z)(x)=c(x)z,\] and the wedge product of $\Omega^{\bullet}(M).$

If $\gamma \in  \Omega^i \big(M, \bigwedge^j {T_M}\big),$ and $s \geq j,$ set 

\begin{equation*}
\begin{split}
(\rho \wedge \gamma)(X_1,\dots,X_{s-j+i})=\frac{1}{(s-j)!i!}\sum_{\sigma \in S_{s-j+i}} sign(\sigma) \rho(X_{\sigma(1)},\dots,X_{\sigma(s-j)},\cdot,\dots,\cdot)\big(\gamma(X_{\sigma(s-j+1)},\dots,X_{\sigma(s-j+i)})\big),
\end{split}
\end{equation*}
where indeed \[\rho(X_{\sigma(1)},\dots,X_{\sigma(s-j)},\cdot,\dots,\cdot) \in \Hom_{\mathbb{R}}\Big(\bigwedge^j {T_M},T_M\Big), \mbox{ and }\gamma(X_{\sigma(s-j+1)},\dots,X_{\sigma(s-j+i)}) \in \bigwedge^j {T_M};\] if $s<j,$ declare $\rho \wedge \gamma=0.$

For simplicity, the same notation is being used to express all products at play here. However, parentheses and context do away with ambiguities. 

Ordinary differential forms also act on $\Omega^{\bullet} (M, T_M).$ If $\beta \in \Omega^k(M),$ and $B \in \Omega^l(M,T_M),$ then 

\begin{equation*}
(\beta \wedge B)(X_1,\dots,X_{k+l})=\frac{1}{k! l!} \sum_{\sigma \in S_{k+l}} sign(\sigma) \beta(X_{\sigma(1)},\dots,X_{\sigma(k)})B(X_{\sigma(k+1)},\dots,X_{\sigma(k+l)}).
\end{equation*}

\begin{remark}\label{KC}
Indeed, $\Omega^{\bullet} (M, T_M)$ is both a left $\Omega^{\bullet} (M)$-module, and a left $\Omega^{\bullet} \big(M, \End_{\mathbb{R}}(T_M)\big)$-module. The right $\Omega^{\bullet} \big(M, \bigwedge^{\bullet} {T_M}\big)$-action on $\Omega^{\bullet} (M, T_M)$ satisfies all module axioms, but one, namely, $\rho \wedge (\gamma \wedge \gamma') \neq (\rho \wedge \gamma) \wedge \gamma'.$
\end{remark}

\section{Integration}
A more thorough treatment of integration of bundle forms can be found in \cite{GSVO1}. Let $g$ be a Riemannian metric on $M.$ This induces a metric on $\bigwedge^p {T_M},$ defined as \[g(v_1 \wedge \dots \wedge v_p, w_1 \wedge \dots \wedge w_p)=\det{\begin{pmatrix} g(v_i,w_j) \end{pmatrix}_{1 \leq i, j \leq p}},\] where $v_i, w_i \in T_M.$ 

For any $p\geq 1,$ the space $\Omega^{\bullet} \big(M, \bigwedge^p{T_M}\big)$ can be endowed with an $L^2$-inner product in the following way. 

Let $(x_i)_{i=1}^n$ be local coordinates on $M,$ which give a local frame $\big(\frac{\partial}{\partial x_i}\big)_{i=1}^n$ of $T_M.$ Then, $\bigwedge^p {T_M}$ is locally framed by $\frac{\partial}{\partial x_{i_1}} \wedge \dots \wedge \frac{\partial}{\partial x_{i_p}},$ $1 \leq i_1 < \dots <i_p \leq n.$ First, consider the bilinear map \[\wedge_g:\Omega^k \big(M, \bigwedge^p{T_M}\big) \otimes \Omega^l \big(M, \bigwedge^p{T_M}\big) \to \Omega^{k+l}(M),\] where if \[\alpha=\sum a_{i_1 \dots i_p} \otimes \frac{\partial}{\partial x_{i_1}} \wedge \dots \wedge \frac{\partial}{\partial x_{i_p}} \in \Omega^k \big(M, \bigwedge^p{T_M}\big),\] and if \[\beta=\sum b_{j_1 \dots j_p} \otimes \frac{\partial}{\partial x_{j_1}} \wedge \dots \wedge \frac{\partial}{\partial x_{j_p}} \in \Omega^l \big(M, \bigwedge^p{T_M}\big),\] \[\alpha \wedge_g \beta = \sum a_{i_1 \dots i_p} \wedge b_{j_1 \dots j_p} g\big(\frac{\partial}{\partial x_{i_1}} \wedge \dots \wedge \frac{\partial}{\partial x_{i_p}}, \frac{\partial}{\partial x_{j_1}} \wedge \dots \wedge \frac{\partial}{\partial x_{j_p}}\big).\] Next, consider the pairing \[\langle \cdot,\cdot \rangle_g:\Omega^k \big(M, \bigwedge^p{T_M}\big) \otimes \Omega^k \big(M, \bigwedge^p{T_M}\big) \to \mathbb{R},\] where if \[\sigma=\sum s_{j_1 \dots j_p} \otimes \frac{\partial}{\partial x_{j_1}} \wedge \dots \wedge \frac{\partial}{\partial x_{j_p}} \in \Omega^k \big(M, \bigwedge^p{T_M}\big),\] then \[\langle \alpha, \sigma \rangle_g=\sum \langle a_{i_1 \dots i_p},s_{j_1 \dots j_p}\rangle g\big(\frac{\partial}{\partial x_{i_1}} \wedge \dots \wedge \frac{\partial}{\partial x_{i_p}}, \frac{\partial}{\partial x_{j_1}} \wedge \dots \wedge \frac{\partial}{\partial x_{j_p}}\big).\] An extension of the Hodge star operator to $\Omega^{\bullet} \big(M, \bigwedge^p{T_M}\big)$ is possible by requiring it to have the defining property that \[\star_g:\Omega^k \big(M, \bigwedge^p{T_M}\big) \to \Omega^{n-k} \big(M, \bigwedge^p{T_M}\big),\] \[\alpha \wedge_g \star_g \alpha'=\langle \alpha,\alpha'\rangle_g \vol_g.\] With the pairing \[\langle \cdot, \cdot \rangle_k: \Omega^k \big(M, \bigwedge^p{T_M}\big) \otimes \Omega^k \big(M, \bigwedge^p{T_M}\big) \to \mathbb{R},\] \[\langle \alpha,\alpha' \rangle_k=\int_M \alpha \wedge_g \star_g \alpha',\] the $L^2$-inner product is \[\langle\langle \cdot,\cdot \rangle\rangle:\Omega^{\bullet} \big(M, \bigwedge^p{T_M}\big) \otimes \Omega^{\bullet} \big(M, \bigwedge^p{T_M}\big) \to \mathbb{R},\] \[\langle\langle A,B\rangle\rangle:=\sum_{k \geq 0} \langle A_k,B_k\rangle_k,\] where $A=\sum_{k \geq 0} A_k, B=\sum_{k \geq 0} B_k,$ and $A_k, B_k \in \Omega^k \big(M, \bigwedge^p{T_M}\big).$ For $p=1,$ this specifies to an $L^2$-inner product on $\Omega^{\bullet} (M, T_M),$ and in ths instance $g$ is the actual metric on $T_M.$

\section{Augmented integrability and complex structures}
Let the space of almost-complex structures on $M$ be denoted by \[AC(M):=\{A \in \Omega^1 (M, T_M) \mid A \circ A=-Id\}.\] The manifold $M$ is complex if it carries an $A \in AC(M)$ such that the Nijenhuis tensor of $A,$ \[N_A (\zeta, \eta)=[A(\zeta), A(\eta)]-A([A(\zeta),\eta]+[\zeta,A(\eta)])-[\zeta,\eta],\] vanishes for all vector fields $\zeta, \eta \in \mathfrak{X}(M)$ \cite{New}. In this case, $A$ is called an integrable almost-complex structure or a complex structure. Throughout, $\nabla$ will be assumed to be a symmetric connection on $T_M.$ 

Consider $d^{\nabla},$ the covariant exterior derivative associated to $\nabla,$ which at degree $k,$ is the map $d^{\nabla}:\Omega^k (M, T_M) \to \Omega^{k+1} (M, T_M),$ 

\begin{equation*}
\begin{split}
(d^{\nabla} \alpha)(\zeta_0,\dots,\zeta_k)&=\sum_{i=0}^k (-1)^i \nabla_{\zeta_i} \alpha(\zeta_0,\dots,\widehat{\zeta_i},\dots,\zeta_k)+\\
&\sum_{0 \leq i <j \leq k} (-1)^{i+j} \alpha ([\zeta_i,\zeta_j],\dots,\widehat{\zeta_i},\dots,\widehat{\zeta_j},\dots,\zeta_k)\\
&=\sum_{i=0}^k (-1)^i (\nabla_{\zeta_i} \alpha) (\zeta_0,\dots,\hat{\zeta_i},\dots,\zeta_k).
\end{split}
\end{equation*}
Recall that $d^{\nabla}$ may be defined too as the unique linear operator that satisfies $d^{\nabla} \psi = \nabla \psi$ if $\psi \in \Omega^0 (M,T_M),$ and the graded Leibniz rule \[d^{\nabla}(\beta \wedge B)=d \beta \wedge B +(-1)^k \beta \wedge d^{\nabla} B\] for $\beta \in \Omega^k(M),$ $B \in \Omega^l(M,T_M).$ In the context of the previous section, this has a formal adjoint, $\delta^{\nabla}.$

The integrability condition has an alternative expression in the language of bundle-valued forms.
 
\begin{lemma}{(Lemma 1, \cite{ACOTI})}\label{K}
$A \in AC(M)$ is integrable iff \[d^{\nabla} A \wedge (A \wedge A)-d^{\nabla} A=0.\]
\end{lemma}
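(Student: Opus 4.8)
The plan is to reduce the bundle-valued identity to the classical Nijenhuis tensor by evaluating both sides on a pair of vector fields $\zeta,\eta$ and unwinding the definitions of the wedge products given in Section 1. Since $A\in\Omega^1(M,T_M)$, the form $d^\nabla A$ lies in $\Omega^2(M,T_M)$, while $A\wedge A\in\Omega^2\big(M,\bigwedge^2 T_M\big)$ (using the untwisted, $\tfrac12$-normalized product on polyvector-valued forms). The right action then produces $d^\nabla A\wedge(A\wedge A)\in\Omega^2(M,T_M)$ again — the degree bookkeeping is $s-j+i = 2-2+2 = 2$ in the notation of the right-action formula — so the stated equation is a genuine identity of $T_M$-valued $2$-forms, and it suffices to check it on an arbitrary pair $\zeta,\eta$.

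First I would compute $(d^\nabla A)(\zeta,\eta) = \nabla_\zeta(A\eta) - \nabla_\eta(A\zeta) - A([\zeta,\eta])$ from the covariant-derivative formula. Then, using that $\nabla$ is torsion-free (so $\nabla_X Y - \nabla_Y X = [X,Y]$), I would substitute and simplify the expression $N_A(\zeta,\eta) = [A\zeta,A\eta] - A([A\zeta,\eta] + [\zeta,A\eta]) - [\zeta,\eta]$; the standard manipulation rewrites each Lie bracket via $\nabla$ and shows
\[
N_A(\zeta,\eta) = (d^\nabla A)(A\zeta,\eta) + (d^\nabla A)(\zeta,A\eta) - A\big((d^\nabla A)(\zeta,\eta)\big) + \text{(a term involving } (d^\nabla A)(A\zeta,A\eta)\text{)},
\]
or some rearrangement thereof — the precise grouping is what the lemma in \cite{ACOTI} records. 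The point is that $N_A$ is expressible purely through $d^\nabla A$ and $A$, independently of the choice of symmetric $\nabla$.

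Next I would evaluate the left-hand side of the claimed equation. The term $-d^\nabla A$ contributes $-(d^\nabla A)(\zeta,\eta)$. For the term $d^\nabla A\wedge(A\wedge A)$, I would first expand $(A\wedge A)(\zeta,\eta) = \tfrac12\big(A\zeta\wedge A\eta - A\eta\wedge A\zeta\big) = A\zeta\wedge A\eta \in \bigwedge^2 T_M$, and then apply the right-action formula with $s=2$, $j=2$, $i=2$: this pairs $d^\nabla A$, viewed after "absorbing" two slots as an element of $\Hom_{\mathbb R}(\bigwedge^2 T_M, T_M)$, against $A\zeta\wedge A\eta$. Concretely $d^\nabla A(\cdot,\cdot)$ on a decomposable bivector $u\wedge v$ should evaluate to $(d^\nabla A)(u,v)$, so that $\big(d^\nabla A\wedge(A\wedge A)\big)(\zeta,\eta) = (d^\nabla A)(A\zeta,A\eta)$ up to the combinatorial normalization — and here one checks that the $\tfrac1{(s-j)!i!}$ factor together with the $\tfrac12$ in the polyvector product conspires to give exactly coefficient $1$, which is the footnote's point. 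Assembling, the left-hand side becomes $(d^\nabla A)(A\zeta,A\eta) - (d^\nabla A)(\zeta,\eta)$, and comparing with the formula for $N_A$ from the previous paragraph — using $A\circ A = -\Id$ to handle the $A\big((d^\nabla A)(\cdot,\cdot)\big)$ and mixed terms — one sees the left-hand side equals (a nonzero constant multiple of) $N_A(\zeta,\eta)$, possibly after applying $A$ once; hence it vanishes for all $\zeta,\eta$ iff $A$ is integrable.

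The main obstacle I anticipate is purely bookkeeping: getting the normalization constants to match. The right-action product is only a "partial module" action (Remark \ref{KC}) and carries the awkward $\tfrac1{(s-j)!i!}$ weight, while the polyvector product carries the extra $\tfrac12$; I expect the bulk of the work is verifying that on the nose $d^\nabla A\wedge(A\wedge A)$ evaluated on $(\zeta,\eta)$ is $(d^\nabla A)(A\zeta, A\eta)$ with no stray factor, since otherwise the equation in the lemma would need a different coefficient. A secondary, more conceptual point to be careful about is the identification $\Hom_{\mathbb R}(\bigwedge^2 T_M, T_M)\cong \bigwedge^2 T^*_M\otimes T_M$ and the convention for how a $T_M$-valued $2$-form is reinterpreted as such a $\Hom$; once that convention is pinned down consistently with the right-action formula, the computation closes. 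Since the algebraic identity $N_A = (d^\nabla A)(A\cdot,A\cdot) - A\big(\cdots\big) - (d^\nabla A)(\cdot,\cdot)$-type relation is already established as Lemma 1 of \cite{ACOTI}, I would in fact be free to cite it and reduce the present proof entirely to the normalization check, which is the honest content here.
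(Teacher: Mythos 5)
The paper itself offers no proof of this lemma: it is imported verbatim as Lemma 1 of \cite{ACOTI}, so there is no internal argument to compare against, and your direct verification is the natural one. Your outline is correct, and the two computations that carry the load check out with the paper's conventions: the $\tfrac{1}{2}$-normalized polyvector product gives $(A\wedge A)(\zeta,\eta)=A\zeta\wedge A\eta$, and in the right-action formula with $s=j=i=2$ the weight $\tfrac{1}{(s-j)!\,i!}=\tfrac{1}{2}$ is cancelled by the two equal terms of the $S_2$-sum, so $\big(d^{\nabla}A\wedge(A\wedge A)\big)(\zeta,\eta)=(d^{\nabla}A)(A\zeta,A\eta)$ with coefficient exactly $1$ (this is precisely what the footnote about the $\tfrac{1}{2}$ factor is guarding). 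The one step you leave open is the exact grouping of the Nijenhuis identity; your guessed intermediate formula with mixed terms $(d^{\nabla}A)(A\zeta,\eta)+(d^{\nabla}A)(\zeta,A\eta)$ is not the right arrangement, but your final hedge is: writing each bracket as $\nabla_U V-\nabla_V U$ (torsion-freeness) and using $A\circ A=-Id$ one finds $N_A(\zeta,\eta)=A\big((d^{\nabla}A)(A\zeta,A\eta)-(d^{\nabla}A)(\zeta,\eta)\big)$, equivalently $\big(d^{\nabla}A\wedge(A\wedge A)-d^{\nabla}A\big)(\zeta,\eta)=-A\,N_A(\zeta,\eta)$, so invertibility of $A$ gives the stated equivalence with no stray constant and exactly one application of $A$. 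One caveat: your closing suggestion to cite Lemma 1 of \cite{ACOTI} for the ``algebraic identity'' and keep only the normalization check would be circular, since that lemma \emph{is} the statement being proved; the honest content is the full computation you sketch, which, as verified above, does close.
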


In addition to integrable almost-complex structures, one may consider nearly integrable ones as described below. These structures could provide a new lens through which to look at the CSE problem. 

Let $\rho \in \Omega^k (M, T_M),$ and $\alpha \in \Omega^1(M)$ be any non-trivial, $d$-closed form. Consider the quantities \[I^{\nabla}_{\rho}:=d^{\nabla} \rho \wedge (\rho \wedge \rho)-d^{\nabla} \rho,\] \[I^{\nabla}_{\alpha, \rho}=\alpha \wedge I^{\nabla}_{\rho},\] and \[I^{\alpha, \nabla}_{\rho}:=(\alpha \wedge d^{\nabla} \rho) \wedge (\rho \wedge \rho)-\alpha \wedge  d^{\nabla} \rho.\] To be clear, in contrast with $I^{\alpha, \nabla}_{\rho},$ where $\rho \wedge \rho \in \Omega^{2k} \big(M, \bigwedge^{2} {T_M}\big)$ is acting on the right of $\alpha \wedge d^{\nabla} \rho \in \Omega^{k+2}(M,T_M),$ in $I^{\nabla}_{\alpha, \rho},$ $\alpha$ is acting on the left of $d^{\nabla} \rho \wedge (\rho \wedge \rho) \in \Omega^{3k-1}(M,T_M),$ and the $3k$-forms $(\alpha \wedge d^{\nabla} \rho) \wedge (\rho \wedge \rho)$ and $\alpha \wedge \big(d^{\nabla} \rho \wedge (\rho \wedge \rho)\big),$ hence also the quantities $I^{\alpha, \nabla}_{\rho}$ and $I^{\nabla}_{\alpha, \rho},$ are different.

Notice that for an almost-complex structure $A \in \Omega^1(M,T_M),$ $I^{\nabla}_A$ is the integrability form of Lemma \ref{K}, and $I^{\nabla}_{\alpha, A}$ slightly generalizes complex structures, while \[I^{\alpha, \nabla}_A= (\alpha \wedge d^{\nabla} A) \wedge (A \wedge A)-\alpha \wedge d^{\nabla} A\] augments $I^{\nabla}_A$ to a $3$-form that though inexact, appears to be the next best thing to a $d^{\nabla}$-exact form encoding integrability. What is meant here is that although $d^{\nabla} \big(A \wedge (A \wedge A)\big)=0$ because $A \wedge (A \wedge A)=0,$ meaning that the most obvious ansatz $A \wedge (A \wedge A)-A$ fails to be a primitive of $I^{\nabla}_A,$ $(\alpha \wedge A)\wedge (A\wedge A)-\alpha \wedge A$ is perhaps the next, most naive guess, and it is not too far from being exact as \[d^{\nabla} \big((\alpha \wedge A)\wedge (A\wedge A)-\alpha \wedge A\big)=-I^{\alpha, \nabla}_A+2(\alpha \wedge A) \wedge (d^{\nabla} A \wedge A).\] Exactness has an impact on the calculus of variations technique that is employed in the proof of Theorem \ref{EP}, for example, and can overall simplify calculations. More precisely, exactness can soften the requirements imposed on the domain of the relevant functionals. 

In summary, the main reason for considering the augmented integrability condition $I^{\nabla}_{\alpha,A}=0$ is geometrical, and that for looking at $I^{\alpha, \nabla}_A=0$ is more technical (exactness). Before proceeding with a comparison of these classes of almost-complex structures, it is useful to note that

\begin{lemma}{(A graded product rule)}\label{LRT}
If $\alpha \in \Omega^k(M,T_M),$ $\beta \in  \Omega^l(M,T_M),$ and if $\nabla'$ denotes the connection on $\bigwedge^{2} {T_M}$ that is induced by the torsion-free connection $\nabla$ on $T_M,$ then
\[d^{\nabla'}(\alpha \wedge \beta)=d^{\nabla} \alpha \wedge \beta+(-1)^k \alpha \wedge d^{\nabla} \beta.\]
\end{lemma}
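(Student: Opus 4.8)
The plan is to reduce the identity to the already-familiar graded Leibniz rule for $d^\nabla$ against \emph{ordinary} differential forms, which is recalled in Section~4, by unwinding the definition of the right $\Omega^\bullet(M,\bigwedge^\bullet T_M)$-action in local coordinates. First I would fix a point $x\in M$ and choose normal coordinates $(x_i)$ for $\nabla$ centered at $x$, so that the Christoffel symbols vanish at $x$; since both sides of the claimed identity are tensorial (pointwise) expressions in $\alpha$, $\beta$ and their first covariant derivatives, it suffices to verify the equality at $x$, where covariant derivatives reduce to ordinary ones on the coordinate components. Write $\alpha = \sum_I a_I\otimes \frac{\partial}{\partial x_{i}}$ and $\beta = \sum_J b_J \otimes \frac{\partial}{\partial x_{j}}$ with $a_I\in\Omega^k(M)$, $b_J\in\Omega^l(M)$. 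Then by definition of the wedge on $\Omega^\bullet(M,\bigwedge^\bullet T_M)$ (the untwisted product with the $\tfrac12$ normalization), $\alpha\wedge\beta = \tfrac12\sum_{I,J} (a_I\wedge b_J)\otimes\big(\tfrac{\partial}{\partial x_i}\wedge \tfrac{\partial}{\partial x_j}\big)$.

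Next I would apply $d^{\nabla'}$ to this expression. The connection $\nabla'$ on $\bigwedge^2 T_M$ induced by $\nabla$ satisfies $\nabla'\big(\tfrac{\partial}{\partial x_i}\wedge\tfrac{\partial}{\partial x_j}\big) = (\nabla\tfrac{\partial}{\partial x_i})\wedge\tfrac{\partial}{\partial x_j} + \tfrac{\partial}{\partial x_i}\wedge(\nabla\tfrac{\partial}{\partial x_j})$, which vanishes at $x$ in normal coordinates. Hence at $x$, $d^{\nabla'}$ acts on $\alpha\wedge\beta$ exactly like the ordinary exterior derivative on the scalar coefficient forms: $d^{\nabla'}(\alpha\wedge\beta)\big|_x = \tfrac12\sum_{I,J} d(a_I\wedge b_J)\otimes\big(\tfrac{\partial}{\partial x_i}\wedge\tfrac{\partial}{\partial x_j}\big)$. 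Now apply the ordinary Leibniz rule $d(a_I\wedge b_J) = da_I\wedge b_J + (-1)^k a_I\wedge db_J$, and, by the same normal-coordinate reasoning applied to $\nabla$ on $T_M$ itself, recognize $\sum_I da_I\otimes\tfrac{\partial}{\partial x_i}$ as $d^\nabla\alpha$ at $x$ and likewise for $\beta$. Re-collecting the two resulting sums as $\tfrac12$-normalized untwisted products gives $d^\nabla\alpha\wedge\beta + (-1)^k\alpha\wedge d^\nabla\beta$ at $x$, which is the claim; since $x$ was arbitrary the identity holds everywhere.

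The one point that needs genuine care — and which I expect to be the main obstacle — is bookkeeping the $\tfrac12$ normalization and the antisymmetrization factors so that the two halves really do reassemble into the \emph{same} normalized product that appears on the right-hand side; in particular one must check that the permutation-sign combinatorics in the definition of $\wedge$ on $\Omega^\bullet(M,\bigwedge^\bullet T_M)$ is compatible with pulling $d$ through, i.e. that no spurious sign or factor of $2$ is introduced when $da_I$ raises the form degree of the first factor from $k$ to $k+1$. A clean way to handle this is to avoid coordinates for the algebra and instead argue abstractly: establish the identity first for decomposable elements $\alpha = \mu\wedge P$, $\beta = \nu\wedge Q$ with $\mu,\nu$ scalar forms and $P,Q$ constant (parallel at $x$) polyvector fields, where it follows from the scalar Leibniz rule together with $d^{\nabla'}(P\wedge Q) = \nabla'P\wedge Q + P\wedge\nabla'Q = 0$ at $x$, and then extend by bilinearity and a partition-of-unity/locality argument. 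I would also double-check the edge cases $k=0$ or $l=0$ directly against the definition of $d^\nabla$ on $\Omega^0(M,T_M)$ as $\nabla$, since those degenerate degrees are where normalization slips are easiest to miss.
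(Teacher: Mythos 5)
Your argument is correct, and it differs from the paper's proof in one genuine respect: where the connection terms get handled. The paper takes pure tensors $\alpha=a\otimes v$, $\beta=b\otimes w$ in an arbitrary chart, expands $d^{\nabla'}(a\wedge b\otimes v\wedge w)=d(a\wedge b)\otimes v\wedge w+(-1)^{k+l}\,a\wedge b\wedge\nabla'(v\wedge w)$, uses the derivation property $\nabla'(v\wedge w)=\nabla v\wedge w+v\wedge\nabla w$, and then regroups the resulting terms into $d^{\nabla}\alpha\wedge\beta$ and $(-1)^k\alpha\wedge d^{\nabla}\beta$. You instead evaluate at the center of $\nabla$-normal coordinates (legitimate precisely because $\nabla$ is assumed torsion-free), so that $\nabla\frac{\partial}{\partial x_i}$ and $\nabla'\big(\frac{\partial}{\partial x_i}\wedge\frac{\partial}{\partial x_j}\big)$ vanish at the point and only the scalar Leibniz rule $d(a_I\wedge b_J)=da_I\wedge b_J+(-1)^k a_I\wedge db_J$ survives. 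Your route buys lighter bookkeeping, and the normalization worry you flag is in fact harmless: each term on both sides contains exactly one $\tfrac12$-normalized product of two $T_M$-valued forms (indeed $(a\otimes v)\wedge(b\otimes w)=\tfrac12(a\wedge b)\otimes(v\wedge w)$), so the factor cancels identically. The paper's route buys a little more generality: it uses only the Leibniz rule for $d^{\nabla'}$ and the derivation property of the induced connection, so torsion-freeness is never actually invoked, whereas your normal-coordinate step needs it. One small correction to your fallback sketch: there is in general no parallel (poly)vector field near $x$, only a frame whose covariant derivative vanishes at the single point $x$, and since $\alpha,\beta$ are $T_M$-valued the second factors are vector fields rather than higher polyvectors; your main argument, however, does not rely on that sketch.
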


\begin{proof}
For simplicity, assume that $\alpha=a \otimes v,$ and $\beta=b\otimes w$ are pure tensors. In local coordindates $(x_i)_{i=1}^n$ on $M,$ one has that 
\begin{equation*}
\begin{split}
d^{\nabla'} (\alpha \wedge \beta)&=d^{\nabla'} (a\wedge b \otimes v \wedge w)\\
&=d(a \wedge b) \otimes v \wedge w+(-1)^{k+l} a\wedge b \wedge \nabla' v \wedge w\\
&=da \wedge b\otimes v \wedge w+(-1)^k a\wedge db \otimes v\wedge w+(-1)^{k+l} \sum_{i=1}^n (a \wedge b \wedge dx_i) \otimes \nabla'_{\frac{\partial}{\partial x_i}} v\wedge w\\
&=da \wedge b\otimes v \wedge w+(-1)^k a\wedge db \otimes v\wedge w+\\
&(-1)^{k+l} \sum_{i=1}^n (a \wedge b \wedge dx_i) \otimes \big(\nabla_{\frac{\partial}{\partial x_i}} v\wedge w+v\wedge \nabla_{\frac{\partial}{\partial x_i}} w \\
&=\Big(da \wedge b\otimes v \wedge w+(-1)^{k+l} \sum_{i=1}^n (a \wedge b \wedge dx_i) \otimes \nabla_{\frac{\partial}{\partial x_i}} v\wedge w\Big)+\\
&(-1)^k\Big(a\wedge db \otimes v\wedge w+(-1)^l \sum_{i=1}^n (a \wedge b \wedge dx_i) \otimes v\wedge \nabla_{\frac{\partial}{\partial x_i}} w\\
&=d^{\nabla} \alpha \wedge \beta+(-1)^k \alpha \wedge d^{\nabla} \beta.
\end{split}
\end{equation*}
\end{proof}

So by Lemma \ref{LRT}, in general, for any $\rho \in \Omega^k(M,T_M),$ 
\begin{equation*}
\begin{split}
d^{\nabla} \big((\alpha \wedge \rho) \wedge (\rho \wedge \rho)-\alpha \wedge \rho\big)&=-(\alpha \wedge d^{\nabla} \rho) \wedge (\rho \wedge \rho)+\big(1+(-1)^{k+1}\big) (\alpha \wedge \rho) \wedge (d^{\nabla} \rho \wedge \rho)+\\
&\alpha \wedge d^{\nabla} \rho \\
&=-I^{\alpha, \nabla}_{\rho}+\big(1+(-1)^{k+1}\big)(\alpha \wedge \rho) \wedge (d^{\nabla} \rho \wedge \rho) \\
&=\begin{cases}
-I^{\alpha, \nabla}_{\rho} & if k\mbox{ is even}\\
-I^{\alpha, \nabla}_{\rho}+2(\alpha \wedge \rho) \wedge (d^{\nabla} \rho \wedge \rho) & if k\mbox{ is odd}.
\end{cases}
\end{split}
\end{equation*}
Moreover, since $\rho \wedge \rho=0$ if $k$ is even, in which case, $I^{\alpha, \nabla}_{\rho}$ is $d^{\nabla}$-exact (i.e.\ $I^{\alpha, \nabla}_{\rho}=d^{\nabla}(\alpha \wedge \rho)$), the above simplifies to
\begin{equation*}
\begin{split}
d^{\nabla} \big((\alpha \wedge \rho) \wedge (\rho \wedge \rho)-\alpha \wedge \rho\big)&=\begin{cases}
\alpha \wedge d^{\nabla} \rho & if k\mbox{ is even}\\
-I^{\alpha, \nabla}_{\rho}+2(\alpha \wedge \rho) \wedge (d^{\nabla} \rho \wedge \rho) & if k\mbox{ is odd}.
\end{cases}
\end{split}
\end{equation*}

This brief discussion is supposed to strengthen the intuition for what it means for $I^{\alpha, \nabla}_A,$ $A \in AC(M),$ to be not too far from being exact.

\begin{definition}\label{alpa}
Those $A \in AC(M)$ such that $I^{\alpha, \nabla}_A=0$ will be called \emph{quasi}-$\alpha$-\emph{integrable}, and those satisfying $I^{\nabla}_{\alpha, A}=0$ will be called $\alpha$-integrable. 
\end{definition}

The concepts of (quasi-)$\alpha$-integrability, integrability, and $d^{\nabla}$-closedness are related to each other in the following ways (see Figures \ref{PR1} and \ref{PR2} below as well). Let $A \in AC(M).$ In order to be consistent with \cite{ACOTI}, $A$ will be called \emph{special} if $d^{\nabla} A=0,$ and $\alpha$-special if $\alpha \wedge d^{\nabla} A=0.$ First of all, if $A$ is special, it is $\alpha$-special, quasi-$\alpha$-integrable, integrable, and $\alpha$-integrable for all auxiliary $\alpha \in \Omega^1(M)$ ($d^{\nabla} A=0 \implies \alpha \wedge d^{\nabla} A=0, I^{\alpha, \nabla}_A=0, I^{\nabla}_A=0, \mbox{ and }I^{\nabla}_{\alpha, A}=0$). It should also be evident that if $A$ is integrable, it is $\alpha$-integrable, for all auxiliary $\alpha$ ($I^{\nabla}_A = 0 \implies I^{\nabla}_{\alpha, A}=0$). Observe that 

\begin{equation}\label{onethird} 
\alpha \wedge \big(d^{\nabla} A \wedge (A \wedge A)\big)=\frac{1}{3}(\alpha \wedge d^{\nabla} A) \wedge (A \wedge A),
\end{equation} which can be seen straight from the definitions in the first section:
\begin{equation*}
\begin{split}
\big((\alpha \wedge d^{\nabla} A)\wedge (A \wedge A)\big)(X_1,X_2,X_3)&=\frac{1}{2}\sum_{\sigma \in S_3} sign(\sigma) \big[\frac{1}{2} \sum_{\theta \in S_3} sign(\theta) \alpha(X_{\theta(\sigma(1))}) \times \\
&d^{\nabla} A \big(A(X_{\theta(\sigma(2))}), A(X_{\theta(\sigma(3))})\big) \big]\\
&=3\big[\alpha(X_1)d^{\nabla} A(A(X_2),A(X_3))-\alpha(X_2)d^{\nabla} A(A(X_1),A(X_3))\\
&-\alpha(X_3)d^{\nabla} A(A(X_2),A(X_1))\big],
\end{split}
\end{equation*}
and
\begin{equation*}
\begin{split}
\big(\alpha \wedge \big(d^{\nabla} \wedge (A \wedge A)\big)\big)(X_1,X_2,X_3)&=\frac{1}{2}\sum_{\sigma \in S_3} sign(\sigma) \alpha(X_{\sigma(1)}) d^{\nabla} A \big(A(X_{\sigma (2)}), A(X_{\sigma (3)})\big)\\
&=\alpha(X_1)d^{\nabla} A(A(X_2),A(X_3))-\alpha(X_2)d^{\nabla} A(A(X_1),A(X_3))\\
&-\alpha(X_3)d^{\nabla} A(A(X_2),A(X_1)).
\end{split}
\end{equation*}
Equation \ref{onethird} then suggest that
\begin{itemize}
\item if $A$ is $\alpha$-special, then it is both $\alpha,$ and quasi-$\alpha$-integrable ($\alpha \wedge d^{\nabla} A=0 \implies I^{\nabla}_{\alpha, A}=0 \mbox{ and }I^{\alpha, \nabla}_A=0$); 
\item conversely, if $A$ is both $\alpha$-integrable and quasi-$\alpha$-integrable, then it is $\alpha$-special, which can be seen from writing \[I^{\nabla}_{\alpha, A}=\frac{1}{3} I^{\alpha, \nabla}_A-\frac{2}{3} \alpha \wedge d^{\nabla} A\] (i.e.\ $I^{\nabla}_{\alpha, A}=0 \mbox{ and }I^{\alpha, \nabla}_A=0 \implies \alpha \wedge d^{\nabla} A=0$).
\end{itemize}

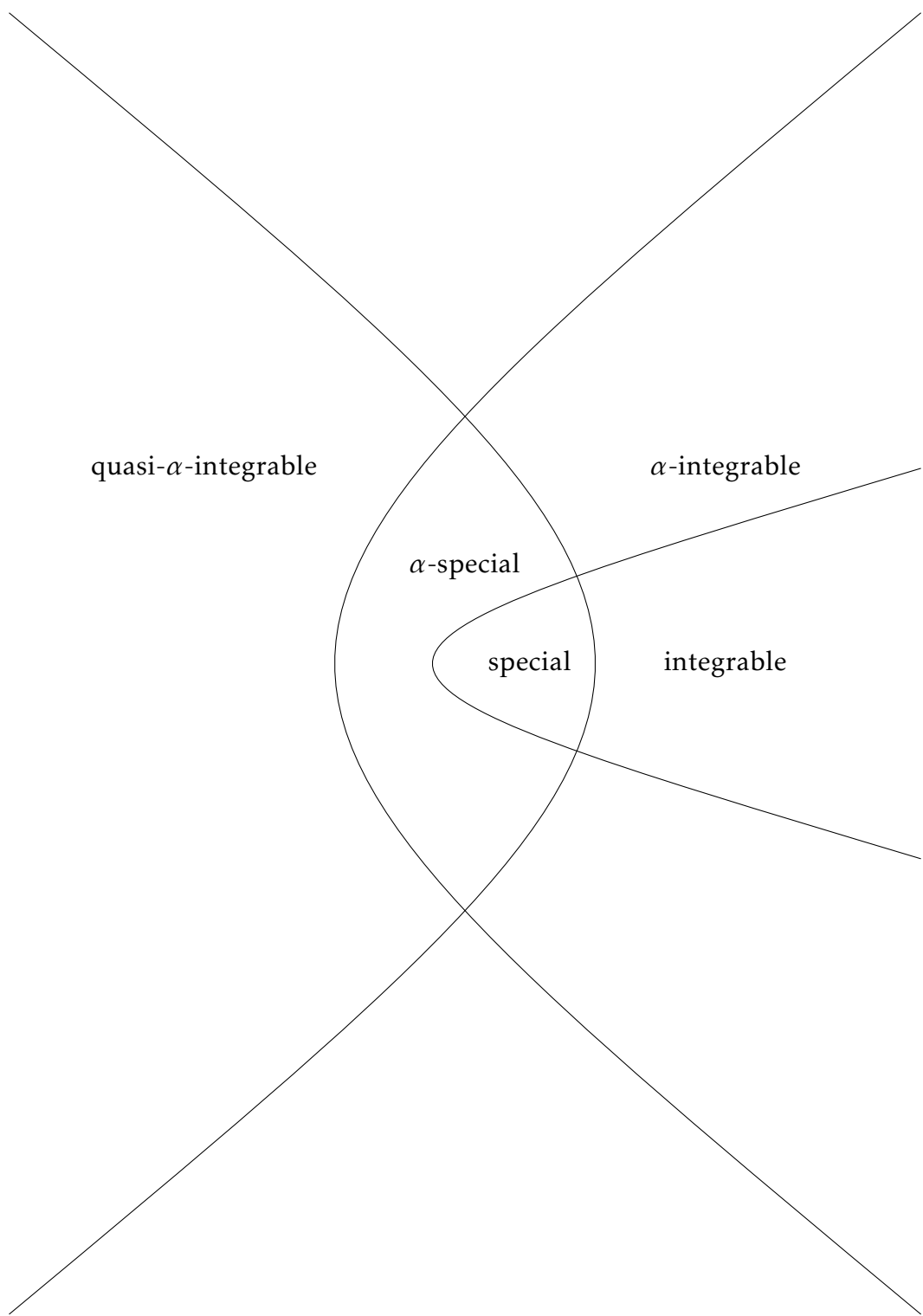
\begin{figure}
\begin{tikzpicture}
\node at (-4,3) {quasi-$\alpha$-integrable};
\node at (4,3) {$\alpha$-integrable};
\node at (4,0) {integrable};
\node at (0,1.5) {$\alpha$-special};
\node at (1,0) {special};
\draw (7,-10) .. controls (-5,0) .. (7,10);
\draw (7,-3) .. controls (-3,0) .. (7,3);
\draw (-7,-10) .. controls (5,0) .. (-7,10);
\end{tikzpicture}
\caption{Relative to a torsion-free connection $\nabla$ on $T_M,$ and a fixed auxiliary $\alpha \in \Omega^1(M)$.}\label{PR1}
\end{figure}

Let $g$ be a Riemannian metric on $M,$ and assume that $\nabla$ is the Levi-Civita connection. Let \[C(M):=\{A \in AC(M) \mid I^{\nabla}_A=0\}\] be the space of complex structures on $M,$ \[AC(M)_g:=\{J \in AC(M) \mid g(J\zeta,J\eta)=g(\zeta,\eta) \forall \zeta,\eta \in \mathfrak{X}(M)\}\] be the space of $g$-orthogonal almost-complex structures, and $C(M)_g:=C(M) \cap AC(M)_g$ be the subspace of integrable such structures. Observe that $A \in AC(M)_g$ is special ($d^{\nabla} A=0$) iff it is K{\"a}hler (i.e.\ iff the almost-hermitian manifold $(M, A, g)$ is K{\"a}hler). And recall the Riemannian geometric defining condition for being K{\"a}hler, which is $\nabla A=0.$ It therefore makes sense to name the $\alpha$-special structures in this setting $\alpha$-K{\"a}hler. More on special almost-complex structures can be found in \cite{ACOTI}, and the sources cited therein. Going back to the equivalence between special and K{\"a}hler, note that indeed, for any $1$-form $A \in \Omega^1(M,T_M),$ $\nabla A=0$ implies that $d^{\nabla} A=0$: \[d^{\nabla} A(X,Y)=(\nabla_X A)Y-(\nabla_Y A)X=0.\] To see why any special $A \in AC(M)_g$ must be K{\"a}hler, let $h(X,Y,Z):=g((\nabla_X A)Y,Z).$ By the compatibility of the Levi-Civita connection, $(\nabla g)(X,Y,Z)=0,$ since $A \in AC(M)_g,$  
\begin{equation*}
\begin{split}
h(X,Y,Z)&=g(\nabla_X (A(Y))-A(\nabla_X Y),Z)\\
&=g(\nabla_X (A(Y)),Z)+g(\nabla_X Y,A(Z))\\
&=\big(\nabla_X g(A(Y),Z)-g(A(Y),\nabla_X Z)\big)+\big(\nabla_X g(Y,A(Z))-g(Y,\nabla_X (A(Z)))\big)\\
&=-g(\nabla_X (A(Z))-A(\nabla_X Z),Y)\\
&=-g((\nabla_X A)Z,Y)\\
&=-h(X,Z,Y).
\end{split}
\end{equation*}
But since at the same time, $(\nabla_X A)Y=(\nabla_Y A)X$ because $A$ is special, $h(X,Y,Z)=h(Y,X,Z),$ so $h(X,Y,Z)=-h(X,Y,Z).$ Then, $h=0,$ so $(\nabla_X A) Y=0$ for all $X,Y\in \mathfrak{X}(M),$ and so $\nabla A=0.$

\begin{figure}
\begin{tikzpicture}
\node at (0,6) {$AC(M)_g$};
\node at (4,1) {$C(M)_g$};
\node at (4,-1) {$C(M)\backslash C(M)_g$};
\node at (0,-6) {$AC(M)\backslash AC(M)_g$};
\node at (4,3) {$\alpha$-integrable};
\node at (-4,3) {quasi-$\alpha$-integrable};
\node at (0,1.5) {$\alpha$-K{\"a}hler};
\node at (0,-1.5) {$\alpha$-special};
\node at (1,0.5) {K{\"a}hler};
\node at (1,-0.5) {special};
\draw (-7,0) -- (7,0);
\draw (7,-10) .. controls (-5,0) .. (7,10);
\draw (7,-3) .. controls (-3,0) .. (7,3);
\draw (-7,-10) .. controls (5,0) .. (-7,10);
\end{tikzpicture}
\caption{Relative to a Riemannian metric $g$ on $M$ with Levi-Civita connection $\nabla$, and a fixed auxiliary $\alpha \in \Omega^1(M)$.}\label{PR2}
\end{figure}
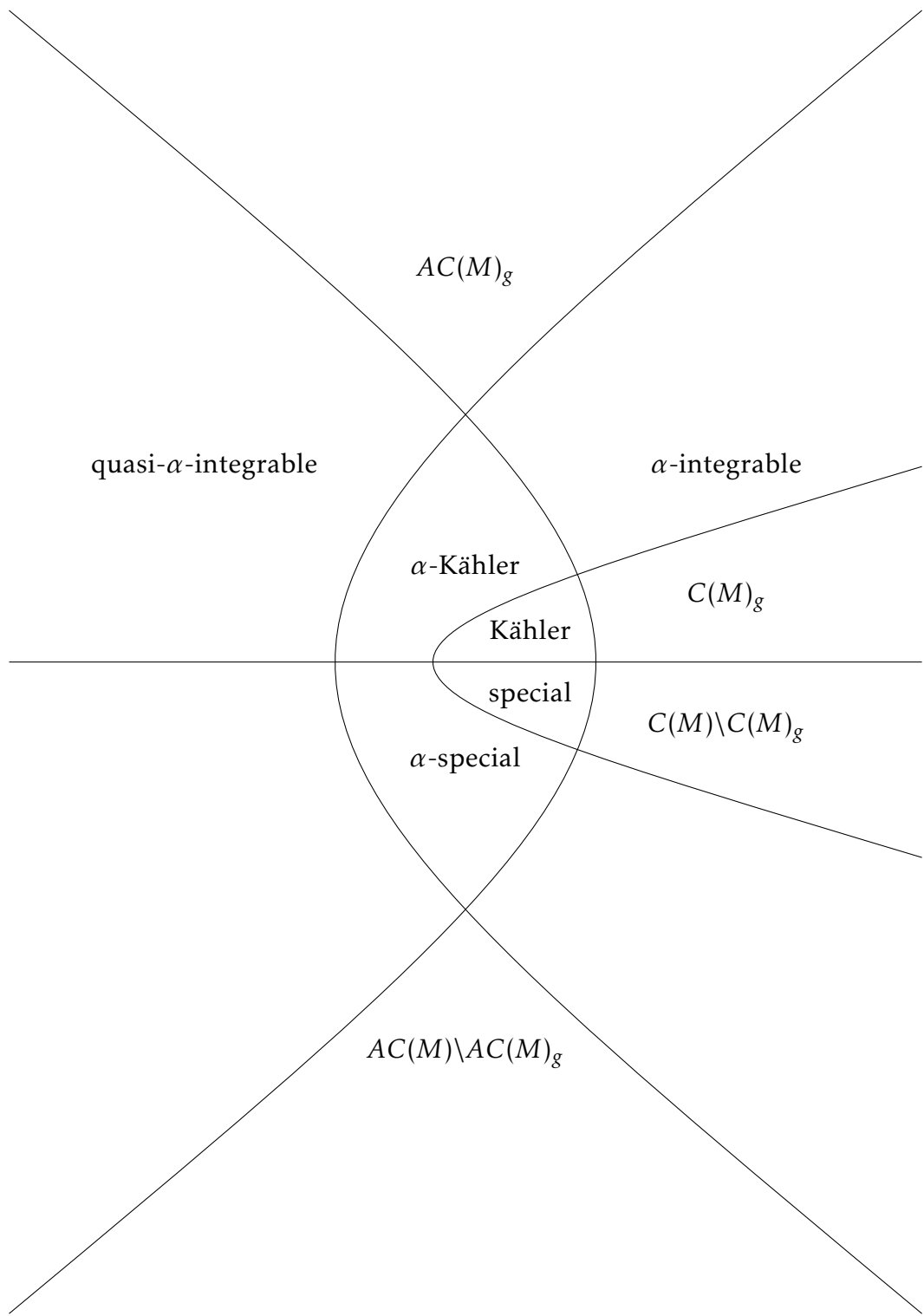

The objective now is to variationally realize all of these almost-complex structures, most importantly, the integrable ones. The problem of finding a functional whose critical points are precisely the complex structures has been an ongoing pursuit. There are examples of functionals for which the complex structures are critical points \cite{Bryant, CLE}, but the issue is usually that there are non-integrable critical points as well. The functionals introduced here bypass this issue. 

Remember the definition of $k$\emph{-variational object} \cite{GSVO1}. This is the degree $k$ projection of any critical point of any functional with domain contained in some space of differential forms, taking values in a vector bundle.

\begin{theorem}\label{EP}
Let $\nabla$ be any torsion-free connection on $T_M,$ and $\alpha \in \Omega^1(M)$ be $d$-closed and different from the zero form. For any $\gamma \in \Omega^{\bullet} (M, T_M),$ let $I^{\alpha, \nabla}_{\gamma}:=\sum_{k\geq 0} I^{\alpha, \nabla}_{\gamma_k},$ and define a functional $\mathcal{C}^{\alpha, \nabla}:\Omega^{\bullet} (M, T_M) \to \mathbb{R}$ by \[\mathcal{C}^{\alpha, \nabla}(\gamma)=\big\langle \big\langle I^{\alpha, \nabla}_{\gamma}, \gamma\big\rangle \big\rangle.\] The quasi-$\alpha$-integrable almost-complex structures on $M$ are $1$-variational objects for $\mathcal{C}^{\nabla,\alpha}.$ 
\end{theorem}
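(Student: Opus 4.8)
The plan is to verify that every quasi-$\alpha$-integrable $A\in AC(M)$, viewed as the element of $\Omega^{\bullet}(M,T_M)$ that equals $A$ in degree $1$ and $0$ in all other degrees, is a critical point of $\mathcal{C}^{\alpha,\nabla}$. Since then $p_1(A)=A$, this exhibits $A$ as a $1$-variational object, so the whole task reduces to showing that the first variation of $\mathcal{C}^{\alpha,\nabla}$ at $\gamma=A$ vanishes in every direction $\eta\in\Omega^{\bullet}(M,T_M)$.

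First I would record, for arbitrary $\gamma,\eta\in\Omega^{\bullet}(M,T_M)$, the first variation
\[
\frac{d}{ds}\Big|_{s=0}\mathcal{C}^{\alpha,\nabla}(\gamma+s\eta)=\Big\langle\!\Big\langle \frac{d}{ds}\Big|_{0}I^{\alpha,\nabla}_{\gamma+s\eta},\ \gamma\Big\rangle\!\Big\rangle+\big\langle\!\big\langle I^{\alpha,\nabla}_{\gamma},\ \eta\big\rangle\!\big\rangle,
\]
which follows from bilinearity of $\langle\langle\cdot,\cdot\rangle\rangle$ together with the fact that $s\mapsto I^{\alpha,\nabla}_{\gamma+s\eta}$ is polynomial in $s$ (recall $I^{\alpha,\nabla}_{\gamma}:=\sum_k I^{\alpha,\nabla}_{\gamma_k}$ is formed component-wise, and each $I^{\alpha,\nabla}_{\rho}$ is at most cubic in $\rho$). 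Specializing to $\gamma=A$, the second summand dies at once: quasi-$\alpha$-integrability of $A$ is exactly the statement $I^{\alpha,\nabla}_A=0$, whence $\langle\langle I^{\alpha,\nabla}_A,\eta\rangle\rangle=0$ for all $\eta$.

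It remains to kill the first summand. Writing $\eta=\sum_k\eta_k$ and using the component-wise definition, $\frac{d}{ds}\big|_{0}I^{\alpha,\nabla}_{A+s\eta}=\sum_k\frac{d}{ds}\big|_{0}I^{\alpha,\nabla}_{(A+s\eta)_k}$. For $k\ne 1$ one has $(A+s\eta)_k=s\eta_k$, so $\big(\alpha\wedge d^{\nabla}(s\eta_k)\big)\wedge\big(s\eta_k\wedge s\eta_k\big)$ is $O(s^3)$ and the derivative contributes only $-\alpha\wedge d^{\nabla}\eta_k\in\Omega^{k+2}(M,T_M)$, a form of degree $\ge 2$. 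For $k=1$, linearizing $(\alpha\wedge d^{\nabla}\rho)\wedge(\rho\wedge\rho)-\alpha\wedge d^{\nabla}\rho$ at $\rho=A$ by the graded product rules of Section~1 (and $A\wedge\eta_1=\eta_1\wedge A$ for $1$-forms) gives
\[
(\alpha\wedge d^{\nabla}\eta_1)\wedge(A\wedge A)+2(\alpha\wedge d^{\nabla}A)\wedge(A\wedge\eta_1)-\alpha\wedge d^{\nabla}\eta_1,
\]
and a degree count shows all three terms lie in $\Omega^{3}(M,T_M)$. Hence $\frac{d}{ds}\big|_{0}I^{\alpha,\nabla}_{A+s\eta}$ has no component in degree $0$ or $1$; since $A$ is a pure degree-$1$ form and $\langle\langle\cdot,\cdot\rangle\rangle$ pairs only components of equal degree, $\langle\langle\frac{d}{ds}\big|_{0}I^{\alpha,\nabla}_{A+s\eta},\ A\rangle\rangle=0$. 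Both summands of the first variation thus vanish for every $\eta$, so $A$ is a critical point of $\mathcal{C}^{\alpha,\nabla}$, and $p_1(A)=A$ completes the proof.

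I do not expect a genuine obstacle here; the computation is short and — notably — needs no integration by parts, i.e.\ no use of the adjoint $\delta^{\nabla}$, so the functional may be taken on all of $\Omega^{\bullet}(M,T_M)$ with no boundary-type conditions imposed on the variations, in line with the earlier remark that working with $I^{\alpha,\nabla}$ keeps the demands on the domain mild. The one thing calling for care is the bookkeeping of the third paragraph: keeping the convention $I^{\alpha,\nabla}_{\gamma}=\sum_k I^{\alpha,\nabla}_{\gamma_k}$ straight, linearizing the cubic term $(\alpha\wedge d^{\nabla}\rho)\wedge(\rho\wedge\rho)$ correctly, and confirming that every resulting term sits in degree $\ge 2$ so that it cannot pair against the pure degree-$1$ object $A$.
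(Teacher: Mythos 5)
Your computation is correct, and it does establish the theorem as literally stated: extending a quasi-$\alpha$-integrable $A$ by zero to an element of $\Omega^{\bullet}(M,T_M)$, the term $\big\langle\big\langle I^{\alpha,\nabla}_{A},\eta\big\rangle\big\rangle$ dies because $I^{\alpha,\nabla}_{A}=0$, the linearized term has components only in degrees $\geq 2$ (degree $k+2$ for $k\neq 1$, degree $3$ for $k=1$) and so cannot pair with the pure degree-$1$ form $A$, hence $A$ is a critical point of the unrestricted functional and $p_1(A)=A$ is $1$-variational. This is, however, a genuinely different and strictly weaker route than the paper's. The paper computes the full first variation, passes to the adjoints of the maps $L^{\alpha}_{\gamma_{2k+1}}$, $M^{\alpha}_{\gamma_{2k+1}}$, $N^{\alpha}_{l}$, and then cuts the domain down to $\widetilde{AC}(M)\subset\widetilde{\Omega}^1(M,T_M)=\{\gamma\mid\gamma_3=0,\ \gamma_9=0,\ \delta^{\nabla}\gamma_5=0\}$ precisely so that the relevant parts of the Euler--Lagrange system collapse to $I^{\alpha,\nabla}_{\gamma_1}=0$ alone, giving the \emph{equality} between the degree-$1$ projection of the critical set of the restricted functional and the set of quasi-$\alpha$-integrable structures. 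That converse inclusion --- no almost-complex structure failing quasi-$\alpha$-integrability appears as the degree-$1$ projection of a critical point --- is exactly the feature advertised before the theorem (bypassing the ``non-integrable critical points'' defect of earlier functionals), and your argument does not address it: on the full domain the degree-$3$ Euler--Lagrange equation couples $I^{\alpha,\nabla}_{\gamma_1}$ to terms involving $\gamma_3$, $\delta^{\nabla}\gamma_5$ and $\gamma_9$, so critical points whose degree-$1$ part is a non-quasi-$\alpha$-integrable almost-complex structure are not excluded. In short, your proof buys brevity (no integration by parts, no restriction of domain) and suffices for the bare ``$1$-variational object'' claim, while the paper's domain-restriction argument is what delivers the exact variational characterization; if your write-up is to replace the paper's proof in spirit, you should add that reverse direction on the restricted domain.
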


\begin{proof}
The idea is to find an intermediary domain of restriction for $\mathcal{C}^{\alpha, \nabla},$ \[\Omega^{\bullet} (M, T_M) \supset \widetilde{\Omega}^{1}(M, T_M) \supset \widetilde{AC}(M):=\{\gamma \in \widetilde{\Omega}^{1}(M, T_M) \mid \gamma_1 \in AC(M)\},\] such that if $c.p\big(\mathcal{C}^{\alpha, \nabla}\big|_{\widetilde{AC}(M)}\big)$ denotes the set of critical points of $\mathcal{C}^{\alpha, \nabla}\Big|_{\widetilde{AC}(M)},$ then \[p_1 \Big(c.p\big(\mathcal{C}^{\alpha, \nabla}\big|_{\widetilde{AC}(M)}\big)\Big)=\{\mbox{quasi-}\alpha-\mbox{integrable almost-complex structures}\}.\] After computing the first variation of $\mathcal{C}^{\alpha, \nabla},$ the intermediary domain, $\widetilde{\Omega}^{1}(M, T_M),$ can be found by eliminating (1) all terms in the Euler-Lagrange (EL) derivative of $\mathcal{C}^{\alpha, \nabla}$ that depend on degree $1$ components $(\gamma_1),$ except for those that appear in the quasi-$\alpha$-integrability form; and (2) all degree $3$ parts of the EL derivative besides the quasi-$\alpha$-integrability form. These steps are meant to ensure that no extra constraints are placed on the projected critical points of $\mathcal{C}^{\alpha, \nabla}\big|_{\widetilde{AC}(M)}.$ 

First, notice that \[I^{\alpha, \nabla}_{\gamma_k}= \begin{cases} 
      (\alpha \wedge d^{\nabla} \gamma_k) \wedge (\gamma_k \wedge \gamma_k)+d^{\nabla} (\alpha \wedge \gamma_k), &k\mbox{ is odd} \\
    
      d^{\nabla} (\alpha \wedge \gamma_k), & k\mbox{ is even},
   \end{cases}
\]
where $ (\alpha \wedge d^{\nabla} \gamma_k) \wedge (\gamma_k \wedge \gamma_k) \in \Omega^{3k} (M, T_M),$ $d^{\nabla} (\alpha \wedge \gamma_k) \in \Omega^{k+2} (M, T_M).$ So one can write
\begin{equation*}
\begin{split}
I^{\alpha, \nabla}_{\gamma} &=\sum_{k\geq 0} I^{\alpha, \nabla}_{\gamma_{2k+1}} + \sum_{k\geq 0} I^{\alpha, \nabla}_{\gamma_{2k}}\\
&=\sum_{k\geq 0} \big[(\alpha \wedge d^{\nabla} \gamma_{2k+1}) \wedge (\gamma_{2k+1} \wedge \gamma_{2k+1})+d^{\nabla}(\alpha \wedge \gamma_{2k+1})+\\
&d^{\nabla}(\alpha \wedge \gamma_{2k})\big].
\end{split}
\end{equation*}

Thus, 
\begin{equation*}
\begin{split}
\mathcal{C}^{\alpha, \nabla}(\gamma)&=\sum_{k\geq 0} \big[\big\langle (\alpha \wedge d^{\nabla} \gamma_{2k+1}) \wedge (\gamma_{2k+1} \wedge \gamma_{2k+1}), \gamma_{6k+3}\big\rangle_{6k+3}+\\
&\big\langle \alpha \wedge \gamma_{2k+1}, \delta^{\nabla} \gamma_{2k+3}\big\rangle_{2k+2}+\big\langle \alpha \wedge \gamma_{2k}, \delta^{\nabla} \gamma_{2k+2}\big\rangle_{2k+1}\big].
\end{split}
\end{equation*}

The first variation is then
\begin{equation*}
\begin{split}
\frac{d}{dt} \Big|_{t=0} \mathcal{C}^{\alpha, \nabla}(\gamma+t\beta)&=\sum_{k\geq 0} \big[\big\langle(\alpha \wedge d^{\nabla} \beta_{2k+1}) \wedge (\gamma_{2k+1} \wedge \gamma_{2k+1})+\\
&2 (\alpha \wedge d^{\nabla} \gamma_{2k+1}) \wedge (\beta_{2k+1} \wedge \gamma_{2k+1}), \gamma_{6k+3}\big\rangle_{6k+3} +\big\langle \alpha \wedge \beta_{2k+1}, \delta^{\nabla} \gamma_{2k+3}\big\rangle_{2k+2}+\\
&\big\langle \alpha \wedge \beta_{2k}, \delta^{\nabla} \gamma_{2k+2}\big\rangle_{2k+1}\big]+\big\langle \big\langle \beta, I^{\alpha, \nabla}_{\gamma}\big\rangle \big\rangle.
\end{split}
\end{equation*}

Consider the linear maps 
\[L^{\alpha}_{\gamma_{2k+1}}:\Omega^{2k+2} (M, T_M) \to \Omega^{6k+3} (M, T_M), \quad L^{\alpha}_{\gamma_{2k+1}}(B)=(\alpha \wedge B) \wedge (\gamma_{2k+1} \wedge \gamma_{2k+1}),\]

\[M^{\alpha}_{\gamma_{2k+1}}:\Omega^{2k+1} (M, T_M) \to \Omega^{6k+3} (M, T_M), \quad M^{\alpha}_{\gamma_{2k+1}}(b)=2(\alpha \wedge d^{\nabla} \gamma_{2k+1}) \wedge (b \wedge \gamma_{2k+1}),\] and 

\[N^{\alpha}_l:\Omega^l (M, T_M) \to \Omega^{l+1} (M, T_M), \quad N^{\alpha}_l(x)=\alpha \wedge x.\]

Then,
\begin{equation*}
\begin{split}
\frac{d}{dt} \Big|_{t=0} \mathcal{C}^{\alpha, \nabla}(\gamma+t\beta)&=\sum_{k\geq 0} \big[\big\langle \beta_{2k+1}, \delta^{\nabla} \big((L^{\alpha}_{\gamma_{2k+1}})^*(\gamma_{6k+3})\big)+(M^{\alpha}_{\gamma_{2k+1}})^* (\gamma_{6k+3})+\\
&(N^{\alpha}_{2k+1})^*(\delta^{\nabla} \gamma_{2k+3})\big\rangle_{2k+1}+\big\langle \beta_{2k}, (N^{\alpha}_{2k})^* (\delta^{\nabla} \gamma_{2k+2})\big\rangle_{2k}\big]+ \big\langle \big\langle \beta, I^{\alpha, \nabla}_{\gamma}\big\rangle \big\rangle.
\end{split}
\end{equation*}

Now, since the residual terms involving $\gamma_1$ (see (1) at the beginning of the proof) are $\delta^{\nabla} \big((L^{\alpha}_{\gamma_1})^*(\gamma_3)\big),$ and $(M^{\alpha}_{\gamma_1})^* (\gamma_3),$ take $\gamma_3=0.$ And since the degree $3$ part of the EL equation is \[\delta^{\nabla} \big((L^{\alpha}_{\gamma_3})^*(\gamma_9)\big)+(M^{\alpha}_{\gamma_3})^* (\gamma_9)+(N^{\alpha}_3)^*(\delta^{\nabla} \gamma_5)+I^{\alpha, \nabla}_{\gamma_1}=0,\] take $\gamma_9=0,$ $\delta^{\nabla} \gamma_5=0$ (see (2)). So, let the intermediary domain of restriction be given as \[\widetilde{\Omega}^1(M,T_M):=\{\gamma \in \Omega^{\bullet} (M, T_M) \mid \gamma_3=0, \gamma_9=0, \delta^{\nabla} \gamma_5=0\},\] and recall that \[\widetilde{AC}(M)=\{\gamma \in \widetilde{\Omega}^1(M,T_M) \mid \gamma_1 \in AC(M)\}.\] If $\mathcal{S}$ is the set of critical points of $\mathcal{C}^{\alpha, \nabla},$ the critical point set of $\mathcal{C}^{\alpha, \nabla}\big|_{\widetilde{AC}(M)}$ is $\mathcal{S} \cap \widetilde{AC}(M),$ and $p_1\big(\mathcal{S} \cap \widetilde{AC}(M)\big)$ coincides with the quasi-$\alpha$-integrable almost-complex structures.
\end{proof}
The above proof has the following immediate consequences. But first, recall from \cite{GSVO1} the adjusted covariant exterior derivative that vanishes in prescribed degrees. Denote the restriction $d^{\nabla}\Big|_{\Omega^k (M, T_M)}$ by $d_k^{\nabla}:\Omega^k (M, T_M) \to \Omega^{k+1} (M, T_M),$ and likewise let $\delta^{\nabla}\Big|_{\Omega^k (M, T_M)}$ be denoted by $\delta_k^{\nabla}:\Omega^k (M, T_M) \to \Omega^{k-1} (M, T_M).$ For any $k,$ define an operator on $\Omega^{\bullet} (M, T_M),$ \[d^{\nabla}[k]:=\sum_{i=0}^{n-1}(1-\delta_{k-1}^i)d_i^{\nabla}\] so that 
\[d^{\nabla}[k]\Big|_{\Omega^i (M, T_M)}= \begin{cases} 
      d_i^{\nabla}, & i \neq k-1 \\
      0 , & i = k-1.
   \end{cases}
\]

This has a formal adjoint $\delta^{\nabla}[k],$ where  
\[\delta^{\nabla}[k]\Big|_{\Omega^i (M, T_M)}= \begin{cases} 
      \delta_i^{\nabla}, & i \neq k \\
      0 , & i = k.
   \end{cases}
\]

\begin{corollary}\label{red}
Redefine $I^{\alpha, \nabla}_{\gamma}$ using $d^{\nabla}[5],$ namely $I^{\alpha, \nabla}_{\gamma}[5]:=\sum_{k\geq 0} I^{\alpha, \nabla}_{\gamma_k}[5],$ where $I^{\alpha, \nabla}_{\gamma_k}[5]=(\alpha \wedge d^{\nabla}[5] \gamma_k) \wedge (\gamma_k \wedge \gamma_k)-\alpha \wedge  d^{\nabla}[5] \gamma_k.$ The quasi-$\alpha$-integrable almost-complex structures are $1$-variational for $\mathcal{C}^{\alpha, \nabla}[5]:\Omega^{\bullet} (M, T_M) \to \mathbb{R},$ where \[\mathcal{C}^{\alpha, \nabla}[5](\gamma)=\big\langle \big\langle I^{\alpha, \nabla}_{\gamma}[5], \gamma\big\rangle \big\rangle.\] 
\end{corollary}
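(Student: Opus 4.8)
The plan is to mirror the proof of Theorem \ref{EP}, replacing $d^{\nabla}$ by $d^{\nabla}[5]$ throughout, and then to observe that the only place where the distinction between $d^{\nabla}$ and $d^{\nabla}[5]$ matters is precisely on the degree-$4$ component. The key point is that $d^{\nabla}[5]$ agrees with $d^{\nabla}$ on $\Omega^i(M,T_M)$ for every $i \neq 4$, and kills $\Omega^4(M,T_M)$; dually, $\delta^{\nabla}[5]$ agrees with $\delta^{\nabla}$ except on $\Omega^5(M,T_M)$, which it annihilates. Since in the expansion of $\mathcal{C}^{\alpha,\nabla}$ the degree-$4$ forms $\gamma_4$ only ever enter through the even-degree summand $d^{\nabla}(\alpha \wedge \gamma_4) \in \Omega^6(M,T_M)$ — equivalently, through a term $\langle \alpha \wedge \gamma_4, \delta^{\nabla}\gamma_6 \rangle_5$ after adjunction — replacing $d^{\nabla}$ by $d^{\nabla}[5]$ changes $\mathcal{C}^{\alpha,\nabla}$ only by deleting this one term, and changes the first variation only by deleting the matching term $\langle \beta_4, (N^\alpha_4)^*(\delta^{\nabla}\gamma_6)\rangle_4$. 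In particular, the odd-degree part of the functional, which carries the quasi-$\alpha$-integrability form $I^{\alpha,\nabla}_{\gamma_1}$ in degree $3$, is entirely unaffected because it only involves $d^{\nabla}$ on forms of degree $2k+1$ and the adjoint $\delta^{\nabla}$ on forms of degree $2k+3$, none of which is degree $4$ or degree $5$ respectively — wait, $\delta^{\nabla}\gamma_5$ does appear. So I must be careful: $\delta^{\nabla}[5]$ kills $\gamma_5$, so the term $(N^\alpha_3)^*(\delta^{\nabla}\gamma_5)$ in the degree-$3$ Euler–Lagrange equation simply disappears.

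Concretely, I would first record the analogue of the opening computation: for $A \in AC(M)$ a $1$-form, $d^{\nabla}[5]A = d^{\nabla}A$ since $1 \neq 4$, so $I^{\alpha,\nabla}_A[5] = I^{\alpha,\nabla}_A$, and the degree-$3$ quasi-$\alpha$-integrability equation is unchanged. Then I would repeat the first-variation calculation verbatim with $d^{\nabla}[5]$ in place of $d^{\nabla}$; by the adjoint bookkeeping above, the Euler–Lagrange system becomes, for the degree-$3$ part,
\[
\delta^{\nabla}[5]\big((L^{\alpha}_{\gamma_3})^*(\gamma_9)\big)+(M^{\alpha}_{\gamma_3})^* (\gamma_9)+I^{\alpha, \nabla}_{\gamma_1}=0
\]
(the $(N^{\alpha}_3)^*(\delta^{\nabla}[5]\gamma_5)$ term is gone since $\delta^{\nabla}[5]\gamma_5 = 0$), and the residual degree-$1$ terms are again $\delta^{\nabla}[5]\big((L^{\alpha}_{\gamma_1})^*(\gamma_3)\big)$ and $(M^{\alpha}_{\gamma_1})^*(\gamma_3)$. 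Hence the intermediary domain simplifies: I can take
\[
\widetilde{\Omega}^1(M,T_M) := \{\gamma \in \Omega^{\bullet}(M,T_M) \mid \gamma_3 = 0,\ \gamma_9 = 0\},
\]
dropping the constraint $\delta^{\nabla}\gamma_5 = 0$, and set $\widetilde{AC}(M) = \{\gamma \in \widetilde{\Omega}^1(M,T_M) \mid \gamma_1 \in AC(M)\}$ as before. Restricting $\mathcal{C}^{\alpha,\nabla}[5]$ to $\widetilde{AC}(M)$ then forces exactly $I^{\alpha,\nabla}_{\gamma_1} = 0$ on the degree-$1$ projections of critical points, and conversely every quasi-$\alpha$-integrable $A$ extends by zero (in degrees $\neq 1$) to a critical point, so $p_1\big(c.p(\mathcal{C}^{\alpha,\nabla}[5]|_{\widetilde{AC}(M)})\big)$ is exactly the set of quasi-$\alpha$-integrable almost-complex structures.

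The main obstacle, and the only step requiring genuine care rather than transcription, is verifying that the integration-by-parts identity $\langle d^{\nabla}[5]\eta, \xi\rangle = \langle \eta, \delta^{\nabla}[5]\xi\rangle$ holds with the correct degree-$5$/degree-$4$ truncations, i.e.\ that $\delta^{\nabla}[5]$ as \emph{defined} in the excerpt (vanishing on $\Omega^5$) really is the formal adjoint of $d^{\nabla}[5]$ (vanishing on $\Omega^4$) — this is where the compactness of $M$ and the absence of boundary terms are used, and one must check that truncating $d^{\nabla}$ in degree $4$ corresponds under adjunction precisely to truncating $\delta^{\nabla}$ in degree $5$, with no cross-degree leakage. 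Granting that (it follows from the degreewise orthogonality of the $L^2$-decomposition $\langle\langle\cdot,\cdot\rangle\rangle = \sum_k \langle\cdot,\cdot\rangle_k$ together with the standard adjunction $\langle d^{\nabla}_i \eta, \xi\rangle_{i+1} = \langle \eta, \delta^{\nabla}_{i+1}\xi\rangle_i$), the rest of the argument is a line-by-line repeat of the proof of Theorem \ref{EP} with the simplification noted above, and in fact yields a cleaner intermediary domain, which is the content of the corollary.
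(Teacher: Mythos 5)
Your overall strategy---rerun the proof of Theorem \ref{EP} with $d^{\nabla}[5]$ in place of $d^{\nabla}$ and conclude that the degree-$5$ co-closedness constraint can be dropped---is exactly the route the paper takes, since it presents the corollary as an immediate consequence of that proof. But the step where you actually drop the constraint does not hold up, and in fact it contradicts your own opening observation. You correctly note that $d^{\nabla}[5]$ differs from $d^{\nabla}$ only on $\Omega^4(M,T_M)$, and that in $I^{\alpha,\nabla}_{\gamma}[5]$ the truncated derivative is applied to the components $\gamma_k$ themselves; hence $I^{\alpha,\nabla}_{\gamma_k}[5]=I^{\alpha,\nabla}_{\gamma_k}$ for every $k\neq 4$, and the functional changes only by deleting the one term $\langle \alpha\wedge\gamma_4,\delta^{\nabla}\gamma_6\rangle_5$. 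But then the first variation changes only in its degree-$4$ and degree-$6$ parts, so the degree-$3$ Euler--Lagrange equation is verbatim the one of Theorem \ref{EP}, including the term $(N^{\alpha}_3)^*(\delta^{\nabla}\gamma_5)$. Your subsequent claim that this term ``simply disappears because $\delta^{\nabla}[5]$ kills $\gamma_5$'' is a non sequitur: that term comes from the linear part $-\alpha\wedge d^{\nabla}[5]\gamma_3=-\alpha\wedge d^{\nabla}\gamma_3$ of $I^{\alpha,\nabla}_{\gamma_3}[5]$, where the truncated operator acts on a degree-$3$ form and therefore coincides with $d^{\nabla}$; after integration by parts the adjoint that appears is the ordinary $\delta^{\nabla}$ applied to $\gamma_5$ (equivalently, $\delta^{\nabla}$ applied to the $4$-form $(N^{\alpha}_4)^*\gamma_5$), and $\delta^{\nabla}[5]$ never legitimately acts on $\gamma_5$ there. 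Note in particular that $-\alpha\wedge d^{\nabla}[5]\gamma_3\neq d^{\nabla}[5](\alpha\wedge\gamma_3)$: the left-hand side equals $d^{\nabla}(\alpha\wedge\gamma_3)$, while the right-hand side is zero.

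Consequently, with the relaxed domain $\{\gamma_3=0,\ \gamma_9=0\}$ the degree-$3$ equation at a critical point reduces to $(N^{\alpha}_3)^*(\delta^{\nabla}\gamma_5)+I^{\alpha,\nabla}_{\gamma_1}=0$, which does not force $I^{\alpha,\nabla}_{\gamma_1}=0$; so the inclusion $p_1\big(\mathcal{S}\cap\widetilde{AC}(M)\big)\subseteq\{\mbox{quasi-}\alpha\mbox{-integrable structures}\}$ is not established (your converse inclusion, extending a quasi-$\alpha$-integrable $A$ by zero, is fine). To obtain the stated relaxation one must arrange that the truncation kills the derivative of the degree-$4$ form $\alpha\wedge\gamma_3$, e.g.\ by using $d$-closedness of $\alpha$ to rewrite the linear part as $d^{\nabla}[5](\alpha\wedge\gamma_k)$ \emph{before} truncating (which is presumably the reading intended in the paper), or by truncating in the degree that actually hits $\gamma_3$; otherwise the constraint $\delta^{\nabla}\gamma_5=0$ must be retained. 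Finally, the point you single out as the ``main obstacle''---that $\delta^{\nabla}[5]$ is the formal adjoint of $d^{\nabla}[5]$---is routine degreewise bookkeeping already asserted in the paper; the genuinely delicate point is the one above, and your proposal passes over it with an argument that your own first paragraph rules out.
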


The intermediary domain of restriction now has the more relaxed definition \[\widetilde{\Omega}^1(M,T_M):=\{\gamma \in \Omega^{\bullet} (M, T_M) \mid \gamma_3=0, \gamma_9=0\}.\] 

\begin{corollary}\label{flow}
Let \[(L^{\alpha}_{\gamma})^* (\gamma)=\sum_{k\geq 0} (L^{\alpha}_{\gamma_{2k+1}})^*(\gamma_{6k+3}),\] \[(M^{\alpha}_\gamma)^*(\gamma)=\sum_{k\geq 0} (M^{\alpha}_{\gamma_{2k+1}})^* (\gamma_{6k+3}),\] and \[(N^{\alpha})^*(\delta^{\nabla} \gamma)=\sum_{k\geq 0} (N^{\alpha}_k)^*(\delta^{\nabla} \gamma_{k+2}).\] Formally, the gradient flow of $\mathcal{C}^{\alpha, \nabla}$ is 
\begin{equation*}
\begin{split}
\frac{\partial \gamma}{\partial t}&=-\Big(\delta^{\nabla} \big((L^{\alpha}_{\gamma})^* (\gamma)\big)+(M^{\alpha}_\gamma)^*(\gamma)+(N^{\alpha})^*(\delta^{\nabla} \gamma)+I^{\alpha, \nabla}_{\gamma}\Big).
\end{split}
\end{equation*}
\end{corollary}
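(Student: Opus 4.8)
The plan is to read off the gradient flow directly from the first-variation computation already carried out in the proof of Theorem \ref{EP}, rather than recomputing anything from scratch. Recall that the first variation was brought into the form
\[
\frac{d}{dt}\Big|_{t=0}\mathcal{C}^{\alpha,\nabla}(\gamma+t\beta)
=\sum_{k\geq 0}\Big[\big\langle \beta_{2k+1},\ \delta^{\nabla}\big((L^{\alpha}_{\gamma_{2k+1}})^*(\gamma_{6k+3})\big)+(M^{\alpha}_{\gamma_{2k+1}})^*(\gamma_{6k+3})+(N^{\alpha}_{2k+1})^*(\delta^{\nabla}\gamma_{2k+3})\big\rangle_{2k+1}
+\big\langle\beta_{2k},\ (N^{\alpha}_{2k})^*(\delta^{\nabla}\gamma_{2k+2})\big\rangle_{2k}\Big]+\big\langle\big\langle\beta,\ I^{\alpha,\nabla}_{\gamma}\big\rangle\big\rangle.
\]
So first I would observe that, after collecting homogeneous pieces across all degrees, this is exactly $\langle\langle \beta,\ G(\gamma)\rangle\rangle$ for a single element $G(\gamma)\in\Omega^{\bullet}(M,T_M)$, where $G(\gamma)=\delta^{\nabla}\big((L^{\alpha}_{\gamma})^*(\gamma)\big)+(M^{\alpha}_\gamma)^*(\gamma)+(N^{\alpha})^*(\delta^{\nabla}\gamma)+I^{\alpha,\nabla}_{\gamma}$, using precisely the aggregated notations $(L^{\alpha}_{\gamma})^*(\gamma)$, $(M^{\alpha}_\gamma)^*(\gamma)$, $(N^{\alpha})^*(\delta^{\nabla}\gamma)$ introduced in the statement. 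The point to check here is purely bookkeeping: the odd-degree sum supplies the $\delta^{\nabla}(L^{\alpha})^*$, $(M^{\alpha})^*$ and the odd part of $(N^{\alpha})^*\delta^{\nabla}$ contributions, the even-degree sum supplies the even part of $(N^{\alpha})^*\delta^{\nabla}$, and the last $L^2$-pairing contributes $I^{\alpha,\nabla}_{\gamma}$ in every relevant degree. No term is double counted and none is missed.

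Next, by definition of the $L^2$-gradient with respect to $\langle\langle\cdot,\cdot\rangle\rangle$, the element $G(\gamma)$ characterized by $\frac{d}{dt}\big|_{t=0}\mathcal{C}^{\alpha,\nabla}(\gamma+t\beta)=\langle\langle\beta,\ G(\gamma)\rangle\rangle$ for all variations $\beta$ \emph{is} $\nabla\!\mathcal{C}^{\alpha,\nabla}(\gamma)$, so the gradient flow is $\partial\gamma/\partial t=-G(\gamma)$, which is the asserted equation. Since this is labeled a formal computation, I would not dwell on analytic issues (existence, short-time behaviour, the precise Banach/Fréchet setting on $\Omega^{\bullet}(M,T_M)$); the word ``formally'' in the statement signals that the claim is only the identification of the flow's right-hand side with the negative EL derivative.

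The only genuine obstacle — and it is mild — is the legitimacy of the adjoint manipulations that were used in passing from the ``raw'' first variation to the bracketed form in Theorem \ref{EP}: moving $\alpha\wedge(\cdot)$, $(\cdot)\wedge(\gamma\wedge\gamma)$, $(\alpha\wedge d^{\nabla}\gamma)\wedge(\cdot\wedge\gamma)$, and $d^{\nabla}$ across the pairing to produce $(N^{\alpha})^*$, $(L^{\alpha})^*$, $(M^{\alpha})^*$, and $\delta^{\nabla}$. These rest on $\alpha$ being $d$-closed (so that integration by parts against $d^{\nabla}$ produces exactly $\delta^{\nabla}$ with no boundary or lower-order correction) and on the existence of the formal adjoints of the bundle operations, which was recorded in the integration section. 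Granting that — exactly as Theorem \ref{EP} already does — the corollary is immediate, so I would present it as a one-paragraph consequence rather than a theorem with an independent argument.
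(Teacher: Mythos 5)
Your proposal is correct and matches the paper's intent exactly: the paper offers no separate argument for Corollary \ref{flow}, presenting it as an immediate consequence of the first-variation formula already derived in the proof of Theorem \ref{EP}, which is precisely how you read off the $L^2$-gradient $\delta^{\nabla}\big((L^{\alpha}_{\gamma})^*(\gamma)\big)+(M^{\alpha}_\gamma)^*(\gamma)+(N^{\alpha})^*(\delta^{\nabla}\gamma)+I^{\alpha,\nabla}_{\gamma}$ and negate it. Your bookkeeping of the odd/even contributions and of the aggregated adjoint notation agrees with the paper, so nothing further is needed.
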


The modification $\mathcal{C}^{\alpha, \nabla}[5]$ has a nearly identical evolution equation (replace every instance of $d^{\nabla}$ with $d^{\nabla}[5]$). 

Both integrable and $\alpha$-integrable structures can be classified in essentially the same manner as the quasi-$\alpha$-integrable ones. But the proofs are worked out here in detail due to some subtle differences, plus it seems like a good idea to be comprehensive when the way of thinking is not so standard. 

There is also a desire to emphasize a pattern that might not be entirely specific to almost-complex manifolds, and which could be meaningful for studying, more broadly, the (non-)existence of common structures in differential geometry.

\begin{theorem}\label{allnew}
Let $I^{\nabla}_{\gamma_k}[1]:=d^{\nabla}[1]\gamma_k \wedge (\gamma_k \wedge \gamma_k)-d^{\nabla}[1] \gamma_k,$ and $I^{\nabla}_{\gamma}[1]=\sum_{k\geq 0} I^{\nabla}_{\gamma_k}[1].$ Complex structures are $1$-variational objects, which can be realized via the functional $\mathcal{C}^{\nabla}[1]:\Omega^{\bullet} (M, T_M) \to \mathbb{R},$ \[\mathcal{C}^{\nabla}[1](\gamma)=\big\langle \big\langle I^{\nabla}_{\gamma}[1], \gamma\big\rangle \big\rangle.\] 
\end{theorem}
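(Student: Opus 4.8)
The plan is to mimic the proof of Theorem \ref{EP}, with the key simplification that the integrability form $I^{\nabla}_{\gamma_k}$ involves $d^{\nabla} \gamma_k$ rather than $\alpha \wedge d^{\nabla}\gamma_k,$ so the relevant quantities appear in degree $3k$ (from $d^{\nabla}\gamma_k \wedge (\gamma_k \wedge \gamma_k)$) and degree $k+1$ (from $d^{\nabla}\gamma_k$). Recall that since $\gamma_k \wedge \gamma_k = 0$ when $k$ is even, $I^{\nabla}_{\gamma_k} = -d^{\nabla}\gamma_k$ in even degree, while $I^{\nabla}_{\gamma_{2k+1}} = d^{\nabla}\gamma_{2k+1}\wedge(\gamma_{2k+1}\wedge\gamma_{2k+1}) - d^{\nabla}\gamma_{2k+1}$ in odd degree. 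Using $d^{\nabla}[1]$ instead of $d^{\nabla}$ kills the degree-$0$ piece $d^{\nabla}\gamma_0 = \nabla\gamma_0 \in \Omega^1(M,T_M)$, which is precisely the term that would otherwise pollute the degree-$1$ Euler–Lagrange equation; this is the analogue of the $[5]$-shift in Corollary \ref{red}, and it is the reason the theorem is stated with $d^{\nabla}[1]$ rather than $d^{\nabla}$.

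First I would expand $\mathcal{C}^{\nabla}[1](\gamma) = \langle\langle I^{\nabla}_{\gamma}[1],\gamma\rangle\rangle$ degree by degree: the term $\langle d^{\nabla}[1]\gamma_{2k+1}\wedge(\gamma_{2k+1}\wedge\gamma_{2k+1}),\gamma_{6k+3}\rangle_{6k+3}$ from the odd cubic part, and, after integrating by parts, $\langle \gamma_k, \delta^{\nabla}[1]\gamma_{k+2}\rangle_k$ (up to sign) from the $-d^{\nabla}[1]\gamma_k$ linear part pairing against $\gamma_{k+2}$. Then I would compute the first variation $\tfrac{d}{dt}\big|_{t=0}\mathcal{C}^{\nabla}[1](\gamma+t\beta)$; the cubic term produces two contributions, one with $d^{\nabla}[1]\beta_{2k+1}$ and one with $\beta_{2k+1}$ appearing linearly inside the right action, exactly as the maps $L^{\alpha}$, $M^{\alpha}$ did before (here with the $\alpha\wedge$ factor removed). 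Collecting terms, the degree-$j$ component of the EL derivative is a sum of: $\delta^{\nabla}[1]\big((L_{\gamma_j})^*\gamma_{3j}\big)$ and $(M_{\gamma_j})^*\gamma_{3j}$ when $j$ is odd, a term $\delta^{\nabla}[1]\gamma_{j+2}$ (coming from the linear part, from $N$ in the earlier proof but with $\alpha$ absent it is just $\pm$ the adjoint of $d^{\nabla}[1]$), and the integrability form $I^{\nabla}_{\gamma_{j-2}}$ itself when $j - 2 \geq 1$, i.e. the degree-$3$ component carries $I^{\nabla}_{\gamma_1}$.

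Next I would isolate the degree-$1$ and degree-$3$ parts. In degree $1$: since $d^{\nabla}[1]$ annihilates $\Omega^0$, the would-be term $\delta^{\nabla}[1]\gamma_3$ is the only residual constraint on $\gamma_1$ coming from the linear part, together with $\delta^{\nabla}[1]\big((L_{\gamma_1})^*\gamma_3\big)$ and $(M_{\gamma_1})^*\gamma_3$ from the cubic part — all three vanish once we impose $\gamma_3 = 0$. In degree $3$: the EL equation reads (schematically) $\delta^{\nabla}[1]\big((L_{\gamma_3})^*\gamma_9\big) + (M_{\gamma_3})^*\gamma_9 + \delta^{\nabla}[1]\gamma_5 + I^{\nabla}_{\gamma_1} = 0$, so imposing $\gamma_9 = 0$ and $\delta^{\nabla}[1]\gamma_5 = 0$ reduces it to exactly $I^{\nabla}_{\gamma_1} = 0$, the integrability condition from Lemma \ref{K}. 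Hence I would define $\widetilde{\Omega}^1(M,T_M) := \{\gamma \mid \gamma_3 = 0,\ \gamma_9 = 0,\ \delta^{\nabla}[1]\gamma_5 = 0\}$ and $\widetilde{AC}(M) := \{\gamma \in \widetilde{\Omega}^1(M,T_M) \mid \gamma_1 \in AC(M)\}$, and conclude that $p_1$ of the critical set of $\mathcal{C}^{\nabla}[1]\big|_{\widetilde{AC}(M)}$ is precisely $C(M)$, the complex structures.

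The main obstacle I anticipate is bookkeeping rather than conceptual: one must check carefully that no \emph{other} degree of the EL equation forces an extra constraint on $\gamma_1$ — i.e. that $\gamma_1$ does not reappear (via the right action $\gamma_k \wedge \gamma_k$ or via $d^{\nabla}[1]$) in any EL component other than degrees $1$ and $3$ — and that after setting $\gamma_3, \gamma_9, \delta^{\nabla}[1]\gamma_5$ to zero the remaining higher-degree EL equations are genuinely unconstraining on $\gamma_1$ (they only couple $\gamma_1$ through $\gamma_3$, which is now $0$). The subtlety flagged in the paragraph before the theorem — ``some subtle differences'' from the quasi-$\alpha$-integrable case — is exactly that here there is no auxiliary $\alpha$ to absorb a degree shift, so the cubic term lives in degree $3k$ rather than $3k$ plus one, and one must re-verify that the self-coupling of $\gamma_1$ happens only in degree $3$; since $3\cdot 1 = 3$ this works out, but the analogous check for $\gamma_3$ (self-coupling in degree $9$) and the linear coupling $\gamma_1 \leftrightarrow \gamma_3$, $\gamma_3 \leftrightarrow \gamma_5$ must all be tracked to be sure the three imposed conditions suffice and introduce no hidden obstruction.
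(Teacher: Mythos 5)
Your high-level plan (compute the first variation, strip the Euler--Lagrange terms that would constrain $\gamma_1$, restrict to an intermediary domain, project by $p_1$) is indeed the paper's, but the degree bookkeeping on which your whole derivation rests is off by one, and it produces the wrong EL system. Without the auxiliary $\alpha$, the cubic part $d^{\nabla}\gamma_k\wedge(\gamma_k\wedge\gamma_k)$ lies in $\Omega^{3k-1}(M,T_M)$, not $\Omega^{3k}$ (the right action by $\gamma_k\wedge\gamma_k\in\Omega^{2k}\big(M,\bigwedge^2 T_M\big)$ subtracts $2$ from the form degree), and the linear part $d^{\nabla}[1]\gamma_k$ lies in $\Omega^{k+1}$, so it pairs with $\gamma_{k+1}$, not $\gamma_{k+2}$. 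Consequently the cubic terms in the first variation pair against $\gamma_{6k+2}$ (not $\gamma_{6k+3}$), the linear terms produce $\langle\beta_{2k+1},\delta^{\nabla}[1]\gamma_{2k+2}\rangle$ and $\langle\beta_{2k},\delta^{\nabla}[1]\gamma_{2k+1}\rangle$, and, crucially, $I^{\nabla}_{\gamma_1}[1]$ is a $2$-form (like the Nijenhuis tensor), so it sits in the degree-$2$ EL component, not the degree-$3$ one. Your displayed ``degree-$3$ equation'' $\delta^{\nabla}[1]\big((L_{\gamma_3})^*\gamma_9\big)+(M_{\gamma_3})^*\gamma_9+\delta^{\nabla}[1]\gamma_5+I^{\nabla}_{\gamma_1}=0$ is not an equation of this functional; it is carried over from the $\alpha$-twisted case of Theorem \ref{EP}. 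The correct elimination is: the $\gamma_1$-dependent terms outside the integrability form are $\delta^{\nabla}[1]\big((L_{\gamma_1})^*(\gamma_2)\big)+(M_{\gamma_1})^*(\gamma_2)$ (the adjoints hit $\gamma_2$, not $\gamma_3$), killed by imposing $\gamma_2=0$; and the degree-$2$ equation is $\delta^{\nabla}[1]\gamma_3+I^{\nabla}_{\gamma_1}[1]=\delta^{\nabla}\gamma_3+I^{\nabla}_{\gamma_1}=0$, handled by $\delta^{\nabla}\gamma_3=0$. So the intermediary domain is $\{\gamma_2=0,\ \delta^{\nabla}\gamma_3=0\}$, not your $\{\gamma_3=0,\ \gamma_9=0,\ \delta^{\nabla}[1]\gamma_5=0\}$; your conditions would incidentally still force $I^{\nabla}_{\gamma_1}=0$ at critical points (since $\gamma_3=0$ kills $\delta^{\nabla}\gamma_3$), but that is not the computation you gave, and the genuine $\gamma_1$--$\gamma_2$ coupling in the degree-$1$ equation, which the paper removes by setting $\gamma_2=0$, goes unexamined in your proposal.

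You also misstate the role of $d^{\nabla}[1]$. Killing $d^{\nabla}\gamma_0$ so that it cannot enter the degree-$1$ equation is a side effect that could anyway be absorbed by restricting the domain ($\gamma_0$ is not the component being classified). The essential point, which the paper flags explicitly as the subtlety absent from Theorem \ref{EP}, is that with the unmodified $d^{\nabla}$ the degree-$0$ EL component contains $\delta^{\nabla}\gamma_1$, i.e.\ a co-closedness constraint on $\gamma_1$ itself, which no restriction on the other components can remove; passing to $d^{\nabla}[1]$, whose formal adjoint $\delta^{\nabla}[1]$ vanishes on $\Omega^1(M,T_M)$, neutralizes exactly this constraint. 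So while your overall strategy matches the paper's, the derivation as written would not yield the correct Euler--Lagrange equations or the paper's intermediary domain, and it misses the one point on which the unmodified argument actually fails.
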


\begin{proof}
The proof is in the style of Theorem \ref{EP}'s. The first variation is

\begin{equation*}
\begin{split}
\frac{d}{dt} \Big|_{t=0} \mathcal{C}^{\nabla}[1](\gamma+t\beta)&=\sum_{k\geq 0} \big[\big\langle d^{\nabla}[1] \beta_{2k+1} \wedge (\gamma_{2k+1} \wedge \gamma_{2k+1})+\\
&2 d^{\nabla}[1] \gamma_{2k+1} \wedge (\beta_{2k+1} \wedge \gamma_{2k+1}), \gamma_{6k+2}\big\rangle_{6k+2} -\big\langle \beta_{2k+1}, \delta^{\nabla}[1] \gamma_{2k+2}\big\rangle_{2k+1}\\
&-\big\langle \beta_{2k}, \delta^{\nabla}[1] \gamma_{2k+1}\big\rangle_{2k}\big]+\big\langle \big\langle \beta, I^{\nabla}_{\gamma}[1] \big\rangle \big\rangle.
\end{split}
\end{equation*}
Consider the linear maps 
\[L_{\gamma_{2k+1}}:\Omega^{2k+2} (M, T_M) \to \Omega^{6k+2} (M, T_M), \quad L_{\gamma_{2k+1}}(B)=B \wedge (\gamma_{2k+1} \wedge \gamma_{2k+1}),\] and

\[M_{\gamma_{2k+1}}:\Omega^{2k+1} (M, T_M) \to \Omega^{6k+2} (M, T_M), \quad M_{\gamma_{2k+1}}(b)=2d^{\nabla}[1] \gamma_{2k+1} \wedge (b \wedge \gamma_{2k+1}).\] 

Then, 
\begin{equation*}
\begin{split}
\frac{d}{dt} \Big|_{t=0} \mathcal{C}^{\nabla}[1](\gamma+t\beta)&=\sum_{k\geq 0} \big[\big\langle \beta_{2k+1}, \delta^{\nabla}[1] \big((L_{\gamma_{2k+1}})^*(\gamma_{6k+2})\big)+(M_{\gamma_{2k+1}})^* (\gamma_{6k+2})\\
&-\delta^{\nabla}[1] \gamma_{2k+2}\big\rangle_{2k+1}-\big\langle \beta_{2k}, \delta^{\nabla}[1] \gamma_{2k+1}\big\rangle_{2k}\big]+ \big\langle \big\langle \beta, I^{\nabla}_{\gamma}[1]\big\rangle \big\rangle.
\end{split}
\end{equation*}

In order to eliminate occurrences of $\gamma_1$ happening outside of the intergability form, $\delta^{\nabla}[1] \big((L_{\gamma_1})^*(\gamma_2)\big)+(M_{\gamma_1})^* (\gamma_2)=0,$ take $\gamma_2=0.$ Notice how the co-closed condition on $\gamma_1$ is neutralized with the choice of operator $d^{\nabla}[1]$ (in place of $d^{\nabla}$). This is a subtle issue that does not come up in the proof of Theorem \ref{EP}. Additionally, since the degree $2$ part of the EL equation is $\delta^{\nabla}[1] \gamma_3+I^{\nabla}_{\gamma_1}[1]=\delta^{\nabla} \gamma_3+I^{\nabla}_{\gamma_1}=0,$ require $\delta^{\nabla} \gamma_3=0.$ The intermediary domain should then be \[\widetilde{\Omega}^1(M,T_M):=\{\gamma \in \Omega^{\bullet} (M, T_M) \mid \gamma_2=0, \delta^{\nabla}\gamma_3=0\}.\] Letting $\mathcal{S}$ stand for the critical point set of the functional, $\mathcal{S} \cap \widetilde{AC}(M)$ are the critical points of its restriction to $\widetilde{AC}(M)=\{\gamma \in \widetilde{\Omega}^1(M,T_M) \mid \gamma_1 \in AC(M)\},$ and \[p_1\big(\mathcal{S} \cap \widetilde{AC}(M)\big)=\{\mbox{ complex structures }\}\] as desired.
\end{proof}

\begin{corollary}\label{redy}
Redefining $I^{\nabla}_{\gamma}[1]$ using $d^{\nabla}[1, 3]:=\sum_{i=0}^{n-1}(1-\delta_0^i-\delta_2^i)d_i^{\nabla},$ (cf.\ Corollary \ref{red}, and the brief discussion following the proof of Theorem \ref{EP}), one finds that the integrable almost-complex structures are $1$-variational for $\mathcal{C}^{\nabla}[1, 3]:\Omega^{\bullet} (M, T_M) \to \mathbb{R},$ where \[\mathcal{C}^{\nabla}[1, 3](\gamma)=\big\langle \big\langle I^{\nabla}_{\gamma}[1, 3], \gamma\big\rangle \big\rangle.\] 
\end{corollary}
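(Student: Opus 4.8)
The plan is to rerun the proof of Theorem \ref{allnew} with $d^{\nabla}[1, 3]$ replacing $d^{\nabla}[1]$ throughout (and its formal adjoint $\delta^{\nabla}[1, 3]$ replacing $\delta^{\nabla}[1]$), and then to observe that the extra vanishing of $d^{\nabla}[1, 3]$ in degree $2$ erases the one constraint on $\gamma_3$ that survived there. Recall that $d^{\nabla}[1, 3]$ annihilates $\Omega^0(M, T_M)$ and $\Omega^2(M, T_M)$ and coincides with $d^{\nabla}$ on all other degrees, so $\delta^{\nabla}[1, 3]$ annihilates $\Omega^1(M, T_M)$ and $\Omega^3(M, T_M)$ and coincides with $\delta^{\nabla}$ elsewhere. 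The useful preliminary observation is that, since $1 \notin \{0, 2\}$, one has $d^{\nabla}[1, 3]\gamma_1 = d^{\nabla}\gamma_1$, hence $I^{\nabla}_{\gamma_1}[1, 3] = I^{\nabla}_{\gamma_1}$; by Lemma \ref{K} this vanishes exactly when $\gamma_1$ is integrable, as in Theorem \ref{allnew}.

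First I would compute the first variation of $\mathcal{C}^{\nabla}[1, 3]$. Because $d^{\nabla}[1, 3]\gamma_0 = d^{\nabla}[1, 3]\gamma_2 = 0$ and $(\gamma_k \wedge \gamma_k) = 0$ for $k$ even, the expansion of $I^{\nabla}_{\gamma}[1, 3]$ keeps the shape it had in Theorem \ref{allnew}: each odd component $\gamma_{2k+1}$ contributes a term $d^{\nabla}[1, 3]\gamma_{2k+1} \wedge (\gamma_{2k+1} \wedge \gamma_{2k+1})$ of degree $6k+2$ and a term $-d^{\nabla}[1, 3]\gamma_{2k+1}$ of degree $2k+2$, while even components contribute only $-d^{\nabla}[1, 3]\gamma_{2k}$ of degree $2k+1$, which is identically zero when $2k \in \{0, 2\}$. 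Using the linear maps $L_{\gamma_{2k+1}}$ and $M_{\gamma_{2k+1}}$ of that proof (with $d^{\nabla}[1, 3]$ inside $M_{\gamma_{2k+1}}$) and the adjoint $\delta^{\nabla}[1, 3]$, the Euler-Lagrange derivative comes out with $\beta_{2k+1}$-coefficient $\delta^{\nabla}[1, 3]\big((L_{\gamma_{2k+1}})^*(\gamma_{6k+2})\big) + (M_{\gamma_{2k+1}})^*(\gamma_{6k+2}) - \delta^{\nabla}[1, 3]\gamma_{2k+2} + \big(I^{\nabla}_{\gamma}[1, 3]\big)_{2k+1}$ and $\beta_{2k}$-coefficient $-\delta^{\nabla}[1, 3]\gamma_{2k+1} + \big(I^{\nabla}_{\gamma}[1, 3]\big)_{2k}$. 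Reading this off degree by degree: in degree $0$ the equation is $-\delta^{\nabla}[1, 3]\gamma_1 = 0$, which is automatic because $\delta^{\nabla}[1, 3]$ kills $\Omega^1(M, T_M)$ --- exactly the neutralization of the co-closedness of $\gamma_1$ used in Theorem \ref{allnew}; in degree $1$ the occurrences of $\gamma_1$ outside the integrability form, namely $\delta^{\nabla}[1, 3]\big((L_{\gamma_1})^*(\gamma_2)\big)$ and $(M_{\gamma_1})^*(\gamma_2)$ (and also the $\gamma_1$-free term $\delta^{\nabla}[1, 3]\gamma_2$), are all removed by imposing $\gamma_2 = 0$, while $\big(I^{\nabla}_{\gamma}[1, 3]\big)_1 = 0$ anyway; and in degree $2$ --- the decisive case --- the only degree-$2$ summand of $I^{\nabla}_{\gamma}[1, 3]$ is $I^{\nabla}_{\gamma_1}$, whereas the term $\delta^{\nabla}[1, 3]\gamma_3$ that in Theorem \ref{allnew} read $\delta^{\nabla}\gamma_3$ and forced the condition $\delta^{\nabla}\gamma_3 = 0$ now vanishes identically, since $I^{\nabla}_{\gamma_2}[1, 3] = 0$ (equivalently $\delta^{\nabla}[1, 3]$ annihilates $\Omega^3(M, T_M)$). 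So the degree-$2$ equation is just $I^{\nabla}_{\gamma_1} = 0$, with no constraint on $\gamma_3$.

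Consequently the intermediary domain relaxes to $\widetilde{\Omega}^1(M, T_M) := \{\gamma \in \Omega^{\bullet}(M, T_M) \mid \gamma_2 = 0\}$, and with $\widetilde{AC}(M) := \{\gamma \in \widetilde{\Omega}^1(M, T_M) \mid \gamma_1 \in AC(M)\}$ I would conclude as in Theorem \ref{allnew}: writing $\mathcal{S}$ for the critical set of $\mathcal{C}^{\nabla}[1, 3]$, the critical set of $\mathcal{C}^{\nabla}[1, 3]\big|_{\widetilde{AC}(M)}$ is $\mathcal{S} \cap \widetilde{AC}(M)$, and $p_1\big(\mathcal{S} \cap \widetilde{AC}(M)\big)$ is precisely the set of integrable almost-complex structures --- the degree-$2$ equation forces $I^{\nabla}_{\gamma_1} = 0$, and conversely any integrable $J$, extended by the zero form in every other degree, lies in $\mathcal{S} \cap \widetilde{AC}(M)$ and projects to $J$ because every higher-degree equation is then trivially met. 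The step I expect to demand real care is the first-variation computation itself: $d^{\nabla}[1, 3]$ does not obey the graded Leibniz rule, so the formal manipulations used for $d^{\nabla}$ in Theorem \ref{EP} cannot simply be quoted; one must check degreewise that moving $d^{\nabla}[1, 3]$ across the $L^2$ pairing produces exactly $\delta^{\nabla}[1, 3]$ with no residual cross terms, that $I^{\nabla}_{\gamma_1}$ emerges cleanly in degree $2$, and that no dependence on $\gamma_1$ sneaks into degrees $\geq 3$. This is the analogue of the verification underlying Corollary \ref{red}, and I expect it to be routine.
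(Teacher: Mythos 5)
Your proposal is correct and follows essentially the same route as the paper: Corollary \ref{redy} is treated there as an immediate consequence of rerunning the proof of Theorem \ref{allnew} with $d^{\nabla}[1,3]$ in place of $d^{\nabla}[1]$, the only change being that the vanishing of $d^{\nabla}[1,3]$ on $\Omega^2(M,T_M)$ (equivalently of $\delta^{\nabla}[1,3]$ on $\Omega^3(M,T_M)$) removes the constraint $\delta^{\nabla}\gamma_3=0$, so the intermediary domain relaxes to $\{\gamma\mid\gamma_2=0\}$, exactly as you argue. Your degree-by-degree verification and the zero-extension check of the converse inclusion are consistent with (and slightly more explicit than) the paper's treatment.
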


The intermediary domain of restriction now becomes \[\widetilde{\Omega}^1(M,T_M):=\{\gamma \in \Omega^{\bullet} (M, T_M) \mid \gamma_2=0\}.\] 

\begin{corollary}\label{flows}
Let \[(L_{\gamma})^* (\gamma)=\sum_{k\geq 0} (L_{\gamma_{2k+1}})^*(\gamma_{6k+2}),\] and \[(M_\gamma)^*(\gamma)=\sum_{k\geq 0} (M_{\gamma_{2k+1}})^* (\gamma_{6k+2}).\] The formal gradient flow of $\mathcal{C}^{\nabla}[1]$ is 
\begin{equation*}
\begin{split}
\frac{\partial \gamma}{\partial t}&=-\Big(\delta^{\nabla}[1] \big((L_{\gamma})^* (\gamma)\big)+(M_\gamma)^*(\gamma)-\delta^{\nabla}[1] \gamma+I^{\nabla}_{\gamma}[1] \Big).
\end{split}
\end{equation*}
\end{corollary}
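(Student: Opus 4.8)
The approach is to simply read off the gradient flow from the first-variation computation already carried out in the proof of Theorem \ref{allnew}, exactly as Corollary \ref{flow} was read off from the first variation in the proof of Theorem \ref{EP}. The point is that once the first variation of $\mathcal{C}^{\nabla}[1]$ has been rewritten in the form $\frac{d}{dt}\big|_{t=0}\mathcal{C}^{\nabla}[1](\gamma+t\beta)=\langle\langle \beta, \mathcal{E}(\gamma)\rangle\rangle$ for a single ``Euler--Lagrange'' expression $\mathcal{E}(\gamma)\in\Omega^{\bullet}(M,T_M)$, the formal $L^2$-gradient of the functional is by definition $\mathcal{E}(\gamma)$, and the gradient flow is $\partial\gamma/\partial t=-\mathcal{E}(\gamma)$.

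Concretely, the first two steps are bookkeeping. First I would assemble the degree-by-degree EL terms appearing in the displayed first variation in the proof of Theorem \ref{allnew}: for each $k\ge 0$ the $\beta_{2k+1}$-pairing contributes $\delta^{\nabla}[1]\big((L_{\gamma_{2k+1}})^*(\gamma_{6k+2})\big)+(M_{\gamma_{2k+1}})^*(\gamma_{6k+2})-\delta^{\nabla}[1]\gamma_{2k+2}$ in degree $2k+1$, and the $\beta_{2k}$-pairing contributes $-\delta^{\nabla}[1]\gamma_{2k+1}$ in degree $2k$; together with the global term $I^{\nabla}_{\gamma}[1]$ coming from $\langle\langle \beta, I^{\nabla}_{\gamma}[1]\rangle\rangle$. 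Second, I would package the odd-degree sums using the abbreviations introduced in the statement, $(L_{\gamma})^*(\gamma)=\sum_{k\ge 0}(L_{\gamma_{2k+1}})^*(\gamma_{6k+2})$ and $(M_\gamma)^*(\gamma)=\sum_{k\ge 0}(M_{\gamma_{2k+1}})^*(\gamma_{6k+2})$, and note that $\sum_{k\ge 0}\delta^{\nabla}[1]\gamma_{2k+2}+\sum_{k\ge 0}\delta^{\nabla}[1]\gamma_{2k+1}=\sum_{m\ge 1}\delta^{\nabla}[1]\gamma_m=\delta^{\nabla}[1]\gamma$ (the $m=0$ term being zero anyway), so that all the $\delta^{\nabla}[1]\gamma_{\bullet}$ contributions collapse to the single term $-\delta^{\nabla}[1]\gamma$. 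Collecting everything yields $\mathcal{E}(\gamma)=\delta^{\nabla}[1]\big((L_{\gamma})^*(\gamma)\big)+(M_\gamma)^*(\gamma)-\delta^{\nabla}[1]\gamma+I^{\nabla}_{\gamma}[1]$, and hence the asserted flow $\partial\gamma/\partial t=-\mathcal{E}(\gamma)$.

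The one genuinely delicate point — the only thing beyond transcription — is checking that the adjoints $(L_{\gamma_{2k+1}})^*$ and $(M_{\gamma_{2k+1}})^*$ and the operator $\delta^{\nabla}[1]$ are well defined as formal $L^2$-adjoints on $\Omega^{\bullet}(M,T_M)$ with respect to the inner product $\langle\langle\cdot,\cdot\rangle\rangle$ of Section 3, i.e.\ that the integrations by parts moving $d^{\nabla}[1]$ off $\beta$ incur no boundary terms (true since $M$ is compact and closed) and that $L_{\gamma_{2k+1}}$, $M_{\gamma_{2k+1}}$, $N^{\alpha}$-type maps are bundle maps so their pointwise metric adjoints exist. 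Since all of this was already invoked implicitly in the proof of Theorem \ref{allnew} and in Corollary \ref{flow}, the ``main obstacle'' here is merely a matter of being careful that the telescoping of the $\delta^{\nabla}[1]\gamma_{\bullet}$ terms over even and odd degrees is done correctly; there is no new analytic content, and the word ``formally'' in the statement signals that questions of existence/regularity of the flow on the Fréchet manifold $\Omega^{\bullet}(M,T_M)$ are deliberately set aside.
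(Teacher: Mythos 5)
Your proposal is correct and matches the paper's (implicit) argument: Corollary \ref{flows} is simply read off from the first-variation formula in the proof of Theorem \ref{allnew}, with the odd- and even-degree codifferential terms collapsing to the single $-\delta^{\nabla}[1]\gamma$ exactly as you describe. The remarks on formal adjoints and compactness are consistent with what the paper already assumes, so nothing further is needed.
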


Similar remarks about flows apply to $\mathcal{C}^{\nabla}[1, 3].$ Finally,

\begin{theorem}\label{allnew2}
The $\alpha$-integrable almost-complex structures are $1$-variational objects. They can be realized via the functional $\mathcal{C}^{\nabla}_{\alpha}:\Omega^{\bullet} (M, T_M) \to \mathbb{R},$ \[\mathcal{C}^{\nabla}_{\alpha}(\gamma)=\big\langle \big\langle I^{\nabla}_{\alpha, \gamma}, \gamma\big\rangle \big\rangle,\] where $I^{\nabla}_{\alpha, \gamma}=\sum_{k\geq 0} \alpha \wedge I^{\nabla}_{\gamma_k},$ and $I^{\nabla}_{\gamma_k}:=d^{\nabla} \gamma_k \wedge (\gamma_k \wedge \gamma_k)-d^{\nabla} \gamma_k.$
\end{theorem}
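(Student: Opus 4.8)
The plan is to run the argument of Theorem \ref{EP} essentially verbatim, exploiting that $I^{\nabla}_{\alpha,\gamma_k}=\alpha\wedge I^{\nabla}_{\gamma_k}$ has the same formal shape as $I^{\alpha,\nabla}_{\gamma_k}$, the two differing only in whether $\alpha$ or $\gamma_k\wedge\gamma_k$ is applied first (compare \eqref{onethird}). First I would decompose $I^{\nabla}_{\alpha,\gamma}$ into homogeneous parts: since $\gamma_k\wedge\gamma_k=0$ for $k$ even, and since $d$-closedness of $\alpha$ together with the graded Leibniz rule for $d^{\nabla}$ gives $\alpha\wedge d^{\nabla}\gamma_k=-d^{\nabla}(\alpha\wedge\gamma_k)$, one gets $I^{\nabla}_{\alpha,\gamma_k}=\alpha\wedge\big(d^{\nabla}\gamma_k\wedge(\gamma_k\wedge\gamma_k)\big)+d^{\nabla}(\alpha\wedge\gamma_k)$ when $k$ is odd (a $3k$-form plus a $(k{+}2)$-form) and $I^{\nabla}_{\alpha,\gamma_k}=d^{\nabla}(\alpha\wedge\gamma_k)$ when $k$ is even. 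Feeding this into $\mathcal{C}^{\nabla}_{\alpha}(\gamma)=\langle\langle I^{\nabla}_{\alpha,\gamma},\gamma\rangle\rangle$ and moving $d^{\nabla}$ onto the second slot yields $\mathcal{C}^{\nabla}_{\alpha}(\gamma)=\sum_{m\geq 0}\big\langle \alpha\wedge\big(d^{\nabla}\gamma_{2m+1}\wedge(\gamma_{2m+1}\wedge\gamma_{2m+1})\big),\gamma_{6m+3}\big\rangle_{6m+3}+\sum_{k\geq 0}\big\langle \alpha\wedge\gamma_k,\delta^{\nabla}\gamma_{k+2}\big\rangle_{k+1}$, which is the exact analogue of the expansion of $\mathcal{C}^{\alpha,\nabla}$ used in the proof of Theorem \ref{EP}.

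Next I would compute the first variation along $\gamma+t\beta$. The cubic term contributes $\big\langle \alpha\wedge\big(d^{\nabla}\beta_{2m+1}\wedge(\gamma_{2m+1}\wedge\gamma_{2m+1})\big)+2\,\alpha\wedge\big(d^{\nabla}\gamma_{2m+1}\wedge(\beta_{2m+1}\wedge\gamma_{2m+1})\big),\gamma_{6m+3}\big\rangle_{6m+3}$, the factor $2$ and the symmetric placement of $\beta_{2m+1}$ coming from $\beta_k\wedge\gamma_k=\gamma_k\wedge\beta_k$ for $k$ odd (since $\gamma\wedge\theta=(-1)^{ik+1}\theta\wedge\gamma$ for $\gamma\in\Omega^i(M,T_M)$, $\theta\in\Omega^k(M,T_M)$), while the bilinear term contributes the standard $\delta^{\nabla}$-pairing plus the extra summand $\langle\langle\beta,I^{\nabla}_{\alpha,\gamma}\rangle\rangle$. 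Introducing the linear maps $L^{\alpha}_{\gamma_{2m+1}}(B):=\alpha\wedge\big(B\wedge(\gamma_{2m+1}\wedge\gamma_{2m+1})\big)$, $M^{\alpha}_{\gamma_{2m+1}}(b):=2\,\alpha\wedge\big(d^{\nabla}\gamma_{2m+1}\wedge(b\wedge\gamma_{2m+1})\big)$ and $N^{\alpha}_l(x):=\alpha\wedge x$ (the counterparts of the maps in Theorem \ref{EP}), passing to their formal adjoints together with $\delta^{\nabla}$, and collecting the coefficient of each $\beta_j$, I would read off the degree-$j$ component of the Euler--Lagrange derivative. In degree $1$ it is $\delta^{\nabla}\big((L^{\alpha}_{\gamma_1})^{*}(\gamma_3)\big)+(M^{\alpha}_{\gamma_1})^{*}(\gamma_3)+(N^{\alpha}_1)^{*}(\delta^{\nabla}\gamma_3)$, with no contribution from the integrability form (as $I^{\nabla}_{\alpha,\gamma}$ has no degree-$1$ part); in degree $3$ it is $\delta^{\nabla}\big((L^{\alpha}_{\gamma_3})^{*}(\gamma_9)\big)+(M^{\alpha}_{\gamma_3})^{*}(\gamma_9)+(N^{\alpha}_3)^{*}(\delta^{\nabla}\gamma_5)+I^{\nabla}_{\alpha,\gamma_1}$; and a direct inspection shows $\gamma_1$ occurs in the Euler--Lagrange system only in degrees $1$ and $3$.

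I would then take the intermediary domain to be $\widetilde{\Omega}^{1}(M,T_M):=\{\gamma\in\Omega^{\bullet}(M,T_M)\mid \gamma_3=0,\ \gamma_9=0,\ \delta^{\nabla}\gamma_5=0\}$ and $\widetilde{AC}(M):=\{\gamma\in\widetilde{\Omega}^{1}(M,T_M)\mid \gamma_1\in AC(M)\}$, exactly as for $\mathcal{C}^{\alpha,\nabla}$. On this domain the degree-$1$ Euler--Lagrange equation holds identically and the degree-$3$ one collapses to $I^{\nabla}_{\alpha,\gamma_1}=\alpha\wedge I^{\nabla}_{\gamma_1}=0$, i.e.\ $\gamma_1$ is $\alpha$-integrable (Definition \ref{alpa}, Lemma \ref{K}); hence $p_1$ of the critical set of $\mathcal{C}^{\nabla}_{\alpha}\big|_{\widetilde{AC}(M)}$ is contained in the set of $\alpha$-integrable almost-complex structures. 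Conversely, for any $\alpha$-integrable $A\in AC(M)$ the element $\gamma=A$ (all higher components zero) lies in $\widetilde{AC}(M)$, and since $\gamma_1$ is its only nonzero component every Euler--Lagrange equation holds — all terms vanish except the degree-$3$ one, which equals $I^{\nabla}_{\alpha,A}=0$ — so $\gamma=A$ is critical. This gives the reverse inclusion, hence $p_1\big(\mathcal{S}\cap\widetilde{AC}(M)\big)=\{\alpha\text{-integrable almost-complex structures}\}$, where $\mathcal{S}$ is the critical set of $\mathcal{C}^{\nabla}_{\alpha}$. As in Corollary \ref{red}, replacing $d^{\nabla}$ by $d^{\nabla}[5]$ in the definition of $I^{\nabla}_{\alpha,\gamma}$ would relax the domain to $\{\gamma_3=0,\ \gamma_9=0\}$, and a formal gradient flow can be written down as in Corollary \ref{flow}.

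The step I expect to be the main obstacle is the bookkeeping just indicated: checking carefully that $\gamma_1$ enters the Euler--Lagrange system only in degrees $1$ and $3$, so that restricting to $\widetilde{AC}(M)$ imposes no constraint on $\gamma_1$ beyond $\alpha$-integrability, and verifying that the three conditions cutting out $\widetilde{\Omega}^{1}(M,T_M)$ simultaneously annihilate the residual $\gamma_1$-dependent terms in degree $1$, isolate the integrability form in degree $3$, and remain compatible with $\gamma=A$ being a genuine critical point. A smaller but essential point of care is the identity $\alpha\wedge d^{\nabla}\gamma_k=-d^{\nabla}(\alpha\wedge\gamma_k)$, valid precisely because $\alpha$ is $d$-closed, which is what lets the linear part of $I^{\nabla}_{\alpha,\gamma}$ be integrated by parts cleanly, together with the sign check $\beta_k\wedge\gamma_k=\gamma_k\wedge\beta_k$ in odd degree that produces the coefficient $2$ in $M^{\alpha}_{\gamma_{2m+1}}$.
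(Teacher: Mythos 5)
Your proposal is correct and follows essentially the same route as the paper's proof: the same decomposition of $I^{\nabla}_{\alpha,\gamma}$ using $d$-closedness of $\alpha$, the same first variation with the maps $\alpha\wedge L_{\gamma_{2k+1}}$, $\alpha\wedge M_{\gamma_{2k+1}}$, $N^{\alpha}_l$ (which the paper denotes $L_{\alpha,\gamma_{2k+1}}$, $M_{\alpha,\gamma_{2k+1}}$ rather than your $L^{\alpha}$, $M^{\alpha}$, reserved there for the quasi-$\alpha$ case), and the same intermediary domain $\{\gamma_3=0,\ \gamma_9=0,\ \delta^{\nabla}\gamma_5=0\}$. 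Your explicit verification of both inclusions, including that $\gamma=A$ with vanishing higher components is critical, is only slightly more detailed than the paper's concluding assertion.
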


\begin{proof}
Following the notation used in the preceding proofs, put $L_{\alpha,\gamma_{2k+1}}:=\alpha \wedge L_{\gamma_{2k+1}},$ and $M_{\alpha,\gamma_{2k+1}}:=\alpha \wedge M_{\gamma_{2k+1}}.$ Then,

\begin{equation*}
\begin{split}
\frac{d}{dt} \Big|_{t=0} \mathcal{C}^{\nabla}_{\alpha}(\gamma+t\beta)&=\sum_{k\geq 0} \big[\big\langle L_{\alpha,\gamma_{2k+1}}(d^{\nabla} \beta_{2k+1})+M_{\alpha,\gamma_{2k+1}}( \beta_{2k+1}),\gamma_{6k+3}\big\rangle_{6k+3}+\\
&\big\langle N^{\alpha}_{2k+1}(\beta_{2k+1}), \delta^{\nabla} \gamma_{2k+3}\big\rangle_{2k+2}+\big\langle N^{\alpha}_{2k}(\beta_{2k}), \delta^{\nabla} \gamma_{2k+2}\big\rangle_{2k+1}\big]+\\
&\big\langle \big\langle I^{\nabla}_{\alpha, \gamma}, \beta\big\rangle \big\rangle\\
&=\sum_{k\geq 0} \big[\big\langle \beta_{2k+1},\delta^{\nabla} \big((L_{\alpha, \gamma_{2k+1}})^*(\gamma_{6k+3})\big)+(M_{\alpha, \gamma_{2k+1}})^* (\gamma_{6k+3})+\\
&(N^{\alpha}_{2k+1})^*(\delta^{\nabla} \gamma_{2k+3})\big\rangle_{2k+1}+\big\langle \beta_{2k}, (N^{\alpha}_{2k})^*(\delta^{\nabla} \gamma_{2k+2})\big\rangle_{2k}\big]+\big\langle \big\langle \beta, I^{\nabla}_{\alpha, \gamma} \big\rangle \big\rangle.
\end{split}
\end{equation*}

With the intermediary domain \[\widetilde{\Omega}^1(M,T_M):=\{\gamma \in \Omega^{\bullet} (M, T_M) \mid \gamma_3=0, \gamma_9=0, \delta^{\nabla} \gamma_5=0\},\] $\widetilde{AC}(M)$ defined as before, and letting $\mathcal{S}$ be the critical point set of $\mathcal{C}^{\nabla}_{\alpha},$ it follows that \[p_1\big(\mathcal{S} \cap \widetilde{AC}(M)\big)=\{\alpha-\mbox{integrable almost-complex structures}\}\] as desired. 
\end{proof}

As in the case of quasi-$\alpha$-integrable structures (Corollary \ref{red}), adjust $\mathcal{C}^{\nabla}_{\alpha}$ by redefining $I^{\nabla}_{\alpha, \gamma}$ with $d^{\nabla}[5].$ Then,

\begin{corollary}\label{redys}
The $\alpha$-integrable almost-complex structures are $1$-variational for the modified functional $\mathcal{C}^{\nabla}_{\alpha}[5].$ 
\end{corollary}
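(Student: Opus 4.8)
The plan is to imitate the proofs of Theorem \ref{EP} and Theorem \ref{allnew2} essentially verbatim, because Corollary \ref{redys} differs from Theorem \ref{allnew2} only in the replacement of $d^{\nabla}$ by the adjusted operator $d^{\nabla}[5]$, and the entire argument is insensitive to that substitution except at one point. First I would recall that $I^{\nabla}_{\gamma_k} = d^{\nabla}\gamma_k \wedge (\gamma_k \wedge \gamma_k) - d^{\nabla}\gamma_k$, so $I^{\nabla}_{\alpha,\gamma_k} = \alpha \wedge I^{\nabla}_{\gamma_k}$, and when we substitute $d^{\nabla}[5]$ we get $I^{\nabla}_{\alpha,\gamma_k}[5] = \alpha \wedge \big(d^{\nabla}[5]\gamma_k \wedge (\gamma_k \wedge \gamma_k) - d^{\nabla}[5]\gamma_k\big)$. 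The key observation is that $d^{\nabla}[5]$ agrees with $d^{\nabla}$ on $\Omega^i(M,T_M)$ for all $i \neq 4$; in particular $d^{\nabla}[5]\gamma_1 = d^{\nabla}\gamma_1$, so the degree-$3$ component of $I^{\nabla}_{\alpha,\gamma}[5]$ evaluated on $\gamma_1 \in AC(M)$ is exactly the $\alpha$-integrability form $\alpha \wedge I^{\nabla}_{\gamma_1} = I^{\nabla}_{\alpha,\gamma_1}$, unchanged. Hence the projected critical point condition still cuts out precisely the $\alpha$-integrable almost-complex structures.

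Next I would compute the first variation of $\mathcal{C}^{\nabla}_{\alpha}[5](\gamma) = \langle\langle I^{\nabla}_{\alpha,\gamma}[5], \gamma\rangle\rangle$ along a path $\gamma + t\beta$, carried out just as in the proof of Theorem \ref{allnew2}, using the graded product rule (Lemma \ref{LRT}) to push $d^{\nabla}[5]$ off the variation and the formal adjoint $\delta^{\nabla}[5]$ to collect coefficients of $\beta_{2k+1}$ and $\beta_{2k}$. Here one uses that $\delta^{\nabla}[5]$ is the formal adjoint of $d^{\nabla}[5]$, as recorded before Corollary \ref{red}, and that $\delta^{\nabla}[5]$ agrees with $\delta^{\nabla}$ except in degree $5$. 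The upshot is an Euler–Lagrange expression with the same shape as in Theorem \ref{allnew2}: the operators $L_{\alpha,\gamma_{2k+1}}$, $M_{\alpha,\gamma_{2k+1}}$, $N^{\alpha}_l$ reappear (with $d^{\nabla}[5]$ in place of $d^{\nabla}$ inside $M$), together with the bulk term $\langle\langle \beta, I^{\nabla}_{\alpha,\gamma}[5]\rangle\rangle$.

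Then, following the recipe of the earlier proofs, I would identify the residual $\gamma_1$-dependent terms outside the integrability form — namely $\delta^{\nabla}[5]\big((L_{\alpha,\gamma_1})^*(\gamma_3)\big)$ and $(M_{\alpha,\gamma_1})^*(\gamma_3)$ — and kill them by imposing $\gamma_3 = 0$; then inspect the degree-$3$ part of the EL equation, which carries $\delta^{\nabla}[5]\big((L_{\alpha,\gamma_3})^*(\gamma_9)\big)$, $(M_{\alpha,\gamma_3})^*(\gamma_9)$, $(N^{\alpha}_3)^*(\delta^{\nabla}[5]\gamma_5)$ and $I^{\nabla}_{\alpha,\gamma_1}[5]$. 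Setting $\gamma_9 = 0$ disposes of the first two; the crucial simplification relative to Theorem \ref{allnew2} is that the term $(N^{\alpha}_3)^*(\delta^{\nabla}[5]\gamma_5)$ vanishes automatically because $\delta^{\nabla}[5]\big|_{\Omega^5(M,T_M)} = 0$ by the definition of the adjusted operator — this is precisely the analogue of the relaxation observed in Corollary \ref{red}, and it removes the need for a $\delta^{\nabla}\gamma_5 = 0$ constraint. So the intermediary domain relaxes to $\widetilde{\Omega}^1(M,T_M) := \{\gamma \in \Omega^{\bullet}(M,T_M) \mid \gamma_3 = 0,\ \gamma_9 = 0\}$, and with $\widetilde{AC}(M) = \{\gamma \in \widetilde{\Omega}^1(M,T_M) \mid \gamma_1 \in AC(M)\}$ and $\mathcal{S}$ the critical set of $\mathcal{C}^{\nabla}_{\alpha}[5]$, one concludes $p_1\big(\mathcal{S} \cap \widetilde{AC}(M)\big) = \{\alpha\text{-integrable almost-complex structures}\}$.

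The main obstacle — really the only point requiring care rather than routine bookkeeping — is verifying that no \emph{new} unwanted constraint is introduced by the switch to $d^{\nabla}[5]$ and $\delta^{\nabla}[5]$: one must check degree by degree that every term of the EL derivative that depends on $\gamma_1$ (other than through $I^{\nabla}_{\alpha,\gamma_1}[5]$) still lands in a degree where it can be annihilated by setting some higher-degree component of $\gamma$ to zero, and that the degree-$3$ piece of the EL equation reduces exactly to $I^{\nabla}_{\alpha,\gamma_1}[5] = I^{\nabla}_{\alpha,\gamma_1} = 0$ once $\gamma_3 = \gamma_9 = 0$. Since $d^{\nabla}[5]$ and $d^{\nabla}$ differ only on $\Omega^4(M,T_M)$, and $\delta^{\nabla}[5]$ and $\delta^{\nabla}$ only on $\Omega^5(M,T_M)$, the discrepancies occur in degrees disjoint from where the $\gamma_1$-terms live (degrees $1$ through $3$), so this check goes through; I would state it explicitly but not belabor it.
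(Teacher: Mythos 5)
Your overall route is the paper's: rerun the proof of Theorem \ref{allnew2} with $d^{\nabla}[5]$ in place of $d^{\nabla}$, keep the same elimination scheme ($\gamma_3=0$, $\gamma_9=0$), and argue that the degree-$5$ co-closedness constraint disappears, so the intermediary domain relaxes to $\{\gamma \mid \gamma_3=0,\ \gamma_9=0\}$ exactly as the paper asserts after Corollary \ref{red}. So in structure you match the (very terse) proof the paper intends.

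However, the one step you yourself single out as the crux --- that the degree-$3$ Euler--Lagrange term becomes $(N^{\alpha}_3)^*(\delta^{\nabla}[5]\gamma_5)$ and hence vanishes because $\delta^{\nabla}[5]\big|_{\Omega^5(M,T_M)}=0$ --- does not follow from the corollary's literal definition of $I^{\nabla}_{\alpha,\gamma}[5]$. That term arises from the $\beta_3$-variation of the $k=3$ linear piece $-\big\langle \alpha\wedge d^{\nabla}[5]\gamma_3,\gamma_5\big\rangle_5$. Since $3\neq 4$ we have $d^{\nabla}[5]\beta_3=d^{\nabla}\beta_3$, so this variation equals $-\big\langle\alpha\wedge d^{\nabla}\beta_3,\gamma_5\big\rangle=\big\langle d^{\nabla}(\alpha\wedge\beta_3),\gamma_5\big\rangle=\big\langle\beta_3,(N^{\alpha}_3)^*(\delta^{\nabla}\gamma_5)\big\rangle$, with the honest $\delta^{\nabla}$: the integration by parts moves the derivative off the $4$-form $\alpha\wedge\beta_3$, i.e.\ passes through exactly the degrees ($4$ for $d$, $5$ for $\delta$) where the adjusted operators differ from the unadjusted ones, contrary to your closing claim that the discrepancies occur in degrees disjoint from the relevant ones. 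Equivalently, the adjoint computed with the modified operator is $\delta^{\nabla}[5]\circ(N^{\alpha}_4)^*$ applied to $\gamma_5$, and $(N^{\alpha}_4)^*\gamma_5\in\Omega^{4}(M,T_M)$, where $\delta^{\nabla}[5]$ coincides with $\delta^{\nabla}$; so nothing vanishes for free, and with $I^{\nabla}_{\alpha,\gamma_k}[5]$ as defined the constraint $\delta^{\nabla}\gamma_5=0$ cannot simply be dropped. The relaxation does go through if the degree-$3$ linear part is taken in its exact (integrated-by-parts) form and the substitution is made there, replacing $d^{\nabla}(\alpha\wedge\gamma_3)$ by $d^{\nabla}[5](\alpha\wedge\gamma_3)=0$, so that the offending pairing with $\gamma_5$ is absent from the functional altogether; this is presumably what the paper has in mind, but it is a genuinely different substitution from replacing $d^{\nabla}$ by $d^{\nabla}[5]$ inside $-\alpha\wedge d^{\nabla}\gamma_3$, precisely because $\alpha\wedge\gamma_3$ lies in the killed degree $4$. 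To close the gap you should either work with that modified linear term (and say so), or retain $\delta^{\nabla}\gamma_5=0$ in the intermediary domain; as written, the identification $(N^{\alpha}_3)^*(\delta^{\nabla}\gamma_5)=(N^{\alpha}_3)^*(\delta^{\nabla}[5]\gamma_5)$ is unjustified.
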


This alteration gets rid of the co-closed condition in degree $5,$ simplifying the intermediary domain to, once again, \[\widetilde{\Omega}^1(M,T_M):=\{\gamma \in \Omega^{\bullet} (M, T_M) \mid \gamma_3=0, \gamma_9=0\}.\]

\begin{corollary}\label{flowss}
Let \[(L_{\alpha,\gamma})^* (\gamma)=\sum_{k \geq 0} (L_{\alpha, \gamma_{2k+1}})^* (\gamma_{6k+3}),\] \[(M_{\alpha,\gamma})^*(\gamma)=\sum_{k \geq 0}(M_{\alpha,\gamma_{2k+1}})^*(\gamma_{6k+3}),\] and \[(N^{\alpha})^*(\delta^{\nabla} \gamma)=\sum_{k\geq 0} (N^{\alpha}_k)^*(\delta^{\nabla} \gamma_{k+2}).\] The formal gradient flow of $\mathcal{C}^{\nabla}_{\alpha}$ is 
\begin{equation*}
\begin{split}
\frac{\partial \gamma}{\partial t}&=-\Big(\delta^{\nabla} \big((L_{\alpha,\gamma})^* (\gamma)\big)+(M_{\alpha,\gamma})^*(\gamma)+(N^{\alpha})^*(\delta^{\nabla} \gamma)+I^{\nabla}_{\alpha, \gamma} \Big).
\end{split}
\end{equation*}
\end{corollary}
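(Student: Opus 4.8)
The plan is to mimic the proof of Theorem \ref{EP} verbatim, making only the bookkeeping adjustments forced by the extra left-multiplication by $\alpha$. First I would record, exactly as in the earlier proofs, the decomposition of $I^{\nabla}_{\alpha,\gamma}$ into odd- and even-degree contributions. Using $I^{\nabla}_{\gamma_k}=d^{\nabla}\gamma_k\wedge(\gamma_k\wedge\gamma_k)-d^{\nabla}\gamma_k$, the even-degree pieces have $\gamma_k\wedge\gamma_k=0$, so $\alpha\wedge I^{\nabla}_{\gamma_{2k}}=-\alpha\wedge d^{\nabla}\gamma_{2k}$, while the odd pieces keep the cubic term. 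Pairing against $\gamma$ and writing $\alpha\wedge d^{\nabla}\gamma_m=d^{\nabla}(\alpha\wedge\gamma_m)-(-1)$-type identities is not even needed here, since $\langle\alpha\wedge d^{\nabla}\gamma_{m},\gamma_{m+2}\rangle=\langle d^{\nabla}\gamma_m,(N^{\alpha}_m)^{*}\gamma_{m+2}\rangle=\langle\gamma_m,\delta^{\nabla}(N^{\alpha}_m)^{*}\gamma_{m+2}\rangle$; this is precisely how the $N^{\alpha}_l$ maps enter. So $\mathcal{C}^{\nabla}_{\alpha}(\gamma)$ is a sum over $k$ of a sextic-pairing term in $\gamma_{2k+1}$ and two quadratic cross-terms, directly parallel to the expression for $\mathcal{C}^{\alpha,\nabla}$ in Theorem \ref{EP}.

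Next I would compute the first variation $\frac{d}{dt}\big|_{t=0}\mathcal{C}^{\nabla}_{\alpha}(\gamma+t\beta)$. The cubic-in-$\gamma_{2k+1}$ term produces, by the product rule and symmetry of the pairing, a $d^{\nabla}\beta_{2k+1}$ contribution routed through $L_{\alpha,\gamma_{2k+1}}:=\alpha\wedge L_{\gamma_{2k+1}}$ and a $\beta_{2k+1}$ contribution routed through $M_{\alpha,\gamma_{2k+1}}:=\alpha\wedge M_{\gamma_{2k+1}}$; the cross-terms contribute through $N^{\alpha}_{2k+1}$ and $N^{\alpha}_{2k}$, plus the ``diagonal'' term $\langle\langle I^{\nabla}_{\alpha,\gamma},\beta\rangle\rangle$. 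Taking formal adjoints $\delta^{\nabla}$, $(L_{\alpha,\gamma_{2k+1}})^{*}$, $(M_{\alpha,\gamma_{2k+1}})^{*}$, $(N^{\alpha}_{l})^{*}$ and collecting the coefficient of $\beta_{2k+1}$ and of $\beta_{2k}$, one reads off the Euler--Lagrange (EL) derivative in each degree. This is exactly the computation displayed in the statement's proof environment, so I would simply carry it through.

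The final step is to choose the intermediary domain $\widetilde{\Omega}^{1}(M,T_M)$ so that projecting the critical points to degree $1$ yields precisely the $\alpha$-integrable almost-complex structures and nothing more. The EL equation in degree $1$ contains $\delta^{\nabla}((L_{\alpha,\gamma_1})^{*}\gamma_3)$ and $(M_{\alpha,\gamma_1})^{*}\gamma_3$ as the only spurious $\gamma_1$-dependent terms, so set $\gamma_3=0$; this also kills all $L,M$-type terms feeding the degree-$3$ EL equation, whose remaining pieces $\delta^{\nabla}((L_{\alpha,\gamma_3})^{*}\gamma_9)$, $(M_{\alpha,\gamma_3})^{*}\gamma_9$, $(N^{\alpha}_3)^{*}\delta^{\nabla}\gamma_5$ force $\gamma_9=0$ and $\delta^{\nabla}\gamma_5=0$. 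With $\widetilde{\Omega}^{1}(M,T_M)=\{\gamma\mid\gamma_3=0,\ \gamma_9=0,\ \delta^{\nabla}\gamma_5=0\}$ and $\widetilde{AC}(M)=\{\gamma\in\widetilde{\Omega}^{1}(M,T_M)\mid\gamma_1\in AC(M)\}$, the degree-$1$ EL equation on $\widetilde{AC}(M)$ reduces to $I^{\nabla}_{\alpha,\gamma_1}=\alpha\wedge I^{\nabla}_{\gamma_1}=0$, i.e.\ $\alpha$-integrability of $\gamma_1$, while all higher-degree EL equations are satisfied identically on the domain. Hence $p_1(\mathcal{S}\cap\widetilde{AC}(M))$ equals the set of $\alpha$-integrable almost-complex structures.

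The routine parts are the adjoint bookkeeping and the index arithmetic ($2k+1\mapsto 6k+3$, etc.), which are identical to the earlier proofs. The one point that genuinely requires care --- and is the analogue of the ``subtle issue'' flagged in Theorem \ref{allnew} about the co-closedness of $\gamma_1$ --- is checking that no additional low-degree constraint sneaks in: specifically, that after killing $\gamma_3$, the degree-$1$ EL equation really does collapse to $\alpha\wedge I^{\nabla}_{\gamma_1}=0$ with no residual $\delta^{\nabla}$-term, and that the three conditions $\gamma_3=\gamma_9=0$, $\delta^{\nabla}\gamma_5=0$ are both necessary and sufficient to clear degrees $3$, $5$, and $9$. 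Unlike Theorem \ref{allnew}, here we do not need the adjusted operator $d^{\nabla}[k]$ at this stage precisely because the $\alpha\wedge$ shifts the degree-$1$ term of $I^{\nabla}_{\alpha,\gamma_1}$ up to degree $2$, away from where a $\delta^{\nabla}\gamma_1$-type obstruction would appear; that is why the extra co-closedness hypothesis can be dropped only later, in Corollary \ref{redys}, by passing to $d^{\nabla}[5]$. I expect verifying this absence of hidden constraints to be the main (though still light) obstacle.
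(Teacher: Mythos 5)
Your proposal is correct and takes the paper's own route: Corollary \ref{flowss} has no separate argument in the paper but is read off from the first variation of $\mathcal{C}^{\nabla}_{\alpha}$ computed in the proof of Theorem \ref{allnew2}, whose coefficient of $\beta$ is exactly $\delta^{\nabla}\big((L_{\alpha,\gamma})^*(\gamma)\big)+(M_{\alpha,\gamma})^*(\gamma)+(N^{\alpha})^*(\delta^{\nabla}\gamma)+I^{\nabla}_{\alpha,\gamma}$, so the flow is its negative, and your variation computation reproduces this. Two cosmetic remarks: the operator acting on $d^{\nabla}\gamma_m$ is $N^{\alpha}_{m+1}$ rather than $N^{\alpha}_m$ (your two adjoint bookkeepings agree because $d^{\nabla}\circ N^{\alpha}_{m}=-N^{\alpha}_{m+1}\circ d^{\nabla}$ for closed $\alpha$, matching the paper's form $(N^{\alpha}_m)^*(\delta^{\nabla}\gamma_{m+2})$), and the discussion of the intermediary domain $\gamma_3=\gamma_9=0$, $\delta^{\nabla}\gamma_5=0$ belongs to Theorem \ref{allnew2} rather than to this corollary, which concerns only the unrestricted gradient.
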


And, as expected, $\mathcal{C}^{\nabla}_{\alpha}[5]$ has its own gradient flow too.

\section{The existence problem}
To understand when a given manifold admits geometric structures of a specific kind, it makes sense to observe the evolution of these structures as the manifold degenerates, in contextually relevant directions, to its worst possible condition. For example, if the setting is K{\"a}hler geometry, and the geometric structures of interest are K{\"a}hler-Einstein metrics on Fano manifolds, then the principle governing the existence of these metrics is called K-stability \cite{CDS}. Here, the contextually relevant directions in the degeneration process are known as test-configurations, and the degenerated end state is the central fiber of the test-configuration. The evolution of canonical K{\"a}hler metrics is inferred from a rational number, the Donaldson-Futaki (DF) invariant. K-stability is the requirement that this invariant stay positive over all non-trivial test-configurations. 

More explicitly, let $X$ be a Fano manifold, and $\Omega$ be a fixed K{\"a}hler class. The K-energy is the unique functional $\mathcal{K}$ on $\Omega$ whose critical points are the constant scalar curvature K{\"a}hler metrics, and such that $\mathcal{K}(0)=0.$ Let $L \rightarrow X$ be an ample line bundle; i.e.\ for some $r,$ there is a basis of sections in $H^0(X,L^r),$ giving an embedding $\epsilon:X \hookrightarrow \mathbb{CP}^{N_r},$ $\epsilon(p)=[s_0(p):\dots:s_{N_r}(p)].$ A \emph{test-configuration} is the data $(\epsilon, \lambda),$ where $\lambda$ is a $\mathbb{C}^*$-action on $\mathbb{CP}^{N_r},$ or equivalently, $\lambda:\mathbb{C}^* \hookrightarrow \GL_{N_r +1}(\mathbb{C})$ is a 1-parameter subgroup. The \emph{central fiber} is the flat limit $X_0:=\lim_{t \to 0} \lambda(t) \cdot X.$ For all $t \neq 0,$ $\omega_t:=\frac{1}{t} \epsilon^* (\omega_{FS})$ is a K{\"a}hler metric on $\lambda(t) \cdot X.$ Up to normalization, the DF invariant of the test-configuration $(\epsilon,\lambda)$ is the asymptotic derivative \[DF(\epsilon,\lambda):=\lim_{t \to \infty} \frac{d\mathcal{K}(\omega_t)}{dt}.\] The concept from the beginning of the section can be phrased in the (almost-)complex setting by analogy with K-stability. 

Choose a reference structure $J \in AC(M).$ As shown in \cite{DG}, and generalized in \cite{GUES}, there is an embedding $F$ of $(M,J)$ into a complex manifold $(Z,J_Z)$ that is transverse to a co-rank $n$ distribution $D \subset T_Z.$ In fact, $F(M)$ is contained in a suitably defined real part $Z^{\mathbb{R}}$ of $Z.$ By transversality, there is an $\mathbb{R}$-linear isomorphism $g:F_* (T_M) \to {T_Z}/D.$ Since $J_Z$ descends to a complex structure on ${T_Z}/D,$ $J^{Z,D}_M:=g^{-1} \circ J_Z \circ g$ is an almost-complex structure on $F(M) \subset Z^{\mathbb{R}}.$ 

Now, let $\theta$ be a $\GL^{+}_1(\mathbb{R})$-action on $Z^{\mathbb{R}},$ where $\GL^{+}_1(\mathbb{R})$ is the identity component. Call the pair $(F,\theta)_J$ a \emph{test-configuration of} M \emph{relative (rel.)} $J.$ Set $M_t:=\theta(t) \cdot M.$ For all $t>0,$ the map $f_t:M \to M_t,$ $f_t(p)=\theta(t)p,$ is a diffeomorphism, and so $J_t:=(f_t)_{*} \circ J^{Z,D}_M \circ (f_t)^{-1}_{*}$ is an almost-complex structure on $M_t.$ 

Pick an extension $\gamma_{J_t} \in \widetilde{AC}(M_t).$ The functionals from the previous section give rise to seemingly different stability criteria. Say that $M$ is\\

\noindent
\emph{quasi-}$\alpha$-C\emph{-stable rel.} $J$ iff for all test-configurations $(F,\theta)_J$ rel.\ $J,$ \[C^{\alpha, \nabla} \big((F,\theta)_J\big):=\lim_{t \to \infty} \frac{d\mathcal{C}^{\alpha, \nabla}(\gamma_{J_t})}{dt} \geq 0;\]

\noindent 
$\alpha$-C\emph{-stable rel.} $J$ iff for all test-configurations $(F,\theta)_J$ rel.\ $J,$ \[C^{\nabla}_{\alpha} \big((F,\theta)_J\big):=\lim_{t \to \infty} \frac{d\mathcal{C}^{\nabla}_{\alpha}(\gamma_{J_t})}{dt} \geq 0; \mbox{ and}\] 

\noindent
C\emph{-stable rel.} $J$ iff for all test-configurations $(F,\theta)_J$ rel.\ $J,$ \[C^{\alpha, \nabla} \big((F,\theta)_J\big):=\lim_{t \to \infty} \frac{d\mathcal{C}^{\nabla}(\gamma_{J_t})}{dt} \geq 0.\]

And then say that $M$ is \emph{quasi-}$\alpha$-C\emph{-stable}, $\alpha$-C\emph{-stable}, and C-\emph{stable} iff it is \emph{quasi-}$\alpha$-C\emph{-stable rel.} $J,$ $\alpha$-C\emph{-stable rel.} $J,$ and respectively C\emph{-stable rel.} $J$ for all $J  \in AC(M).$ Similar stability concepts ensue from the functionals in Corollaries \ref{red}, \ref{redy}, and \ref{redys}.

The geometric content of these definitions is yet to be understood. In particular, it is not clear what the central fiber $M_0$ represents, and if the degenerations are appropriate for studying the CSE problem. But provided that these classes of stable almost-complex manifolds are non-empty, it could be interesting to try to understand what do they say about the (almost-)complex geometry of $M.$ In particular, one may ask:
\begin{question}
Are complex manifolds stable in any of the above senses?
\end{question}

\noindent
Gabriella Clemente                                

\noindent
e-mail: clemente6171@gmail.com

\begin{thebibliography}{9}
\bibitem{Bryant} R.\ Bryant. On the geometry of almost complex $6$-manifolds. \emph{Asian J.\ Math.}, Volume 10, Issue 3, 2006, 561 -- 606.

\bibitem{CDS} X.X.\ Chen, S.\ Donaldson and S.\ Sun. K{\"a}hler-Einstein metrics on Fano manifolds, I -- III. \emph{J.\ Amer.\ Math.\ Soc.}, Volume 28, Issue 1, 2015, 183 -- 278.

\bibitem{ACOTI} G.\ Clemente. A curvature obstruction to integrability. arXiv:2108.03376.

\bibitem{CLE} G.\ Clemente. Functionals on the space of almost complex structures. arXiv:2001.07287.

\bibitem{GUES} G.\ Clemente. Geometry of universal embedding spaces for almost complex manifolds. arXiv:1905.06016.

\bibitem{GSVO1} G.\ Clemente. Geometric structures as variational objects, I. arXiv:2111.07956.

\bibitem{DG} J-P.\ Demailly and H.\ Gaussier. Algebraic embeddings of smooth almost complex structures. \emph{J.\ Eur.\ Math.\ Soc.}, Volume 19, 2017, 3391 -- 3419. 
  
\bibitem{New} A.\ Newlander and L.\ Niremberg. Complex analytic coordinates in almost complex manifolds. \emph{Ann.\ Math.}, Volume 65, No.\ 3, 1957, 391 -- 404.
\end{thebibliography}
\end{document}